\newtheorem{proposition}{Proposition}
\newtheorem{defn}{Definition}
\newtheorem{lemma}{Lemma}
\newtheorem{cor}{Corollary}
\newtheorem{thm}{Theorem}
\theoremstyle{remark}
\newcommand{\CC}{\mathbb C}
\newcommand{\Fq}{{\mathbb F}_q}
\newcommand{\Ql}{\bar{\mathbb Q}_\ell}
\newcommand{\Dbc}{D^b_c}
\newcommand{\FF}{\mathcal F}
\newcommand{\GGG}{\mathcal G}
\newcommand{\HH}{\mathcal H}
\newcommand{\LL}{\mathcal L}
\newcommand{\R}{\mathrm R}
\newcommand{\AAA}{\mathbb A}
\newcommand{\GG}{\mathbb G}
\newcommand{\Gm}{{\mathbb G}_m}
\newcommand{\Gmk}{{\mathbb G}_{m,k}}
\newcommand{\ZZ}{\mathbb Z}
\newcommand{\Nm}{\mathrm N}
\newcommand{\PP}{\mathbb P}
\newcommand{\HHH}{\mathrm H}
\title{On a generalization of Jacobi sums}
\author{Antonio Rojas-Le\'on}
\address{Dpto. de \'Algebra, Fac. de Matem\'aticas \\
  Universidad de Sevilla \\
  c/Tarfia, s/n \\
  41012 Sevilla, Spain \\
  \tt{arojas@us.es}}
\begin{document}

\maketitle

\renewcommand{\thefootnote}{}
\footnote{Mathematics Subject Classification: 11L05, 11L07, 11T24}
\footnote{Partially supported by MTM2016-75027-P (Ministerio de Econom\'{\i}a y Competitividad and FEDER) and US-1262169 (Consejería de Econom\'{\i}a, Conocimiento, Empresas y Universidad de la Junta de Andalucía and FEDER)}

\begin{abstract}
We prove an estimate for multi-variable multiplicative character sums over affine subspaces of $\mathbb A^n_k$, which generalize the well known estimates for both classical Jacobi sums and one-variable polynomial multiplicative character sums.
\end{abstract}

\section{Introduction}

Let $k=\Fq$ be a finite field, with $q=p^a$ a prime power. Given $n$ non-trivial multiplicative characters $\chi_1,\ldots,\chi_n:\Fq^\times\to\CC^\times$, the classical Jacobi sum is given by \cite[10.1]{berndt1998gauss}
$$
J(\chi_1,\ldots,\chi_n):=\sum_{\stackrel{x_1,\ldots,x_n\in k}{x_1+\cdots+x_n=1}}\chi_1(x_1)\cdots\chi_n(x_n).
$$

More generally, given a hyperplane $L\subseteq\AAA^n_k$ defined by the equation $a_1x_1+\cdots+a_nx_n=b$ with $a_1,\ldots,a_n,b\neq 0$, the sum
\begin{equation}\label{sum}
S(L;\chi_1,\ldots,\chi_n):=\sum_{\mathbf x\in L}\chi_1(x_1)\cdots\chi_n(x_n)
\end{equation}
can be rewritten, via the change of variables $y_i=a_ix_i/b$, as
$$
\chi_1\cdots\chi_n(b)\overline\chi_1(a_1)\cdots\overline\chi_n(a_n)J(\chi_1,\ldots,\chi_n).
$$

It is well known \cite[Theorem 10.3.1]{berndt1998gauss} that the Jacobi sum can be expressed in terms of Gauss sums:
$$
J(\chi_1,\ldots,\chi_n)=\left\{\begin{array}{ll}
                                G(\chi_1)\cdots G(\chi_n)/G(\chi_1\cdots\chi_n) & \text{if }\chi_1\cdots\chi_n\neq\mathbf 1 \\
                                -G(\chi_1)\cdots G(\chi_n)/q & \text{if }\chi_1\cdots\chi_n=\mathbf 1
                               \end{array}\right.
$$
where $G(\chi):=\sum_{x\in k}\chi(x)\psi(x)$ is the Gauss sum associated to $\chi$, $\psi:k\to\CC^\times$ being an additive character. In particular, since $|G(\chi)|=\sqrt{q}$, we get
$$
|S(L;\chi_1,\ldots,\chi_n)|=\left\{\begin{array}{ll}
                                q^{(n-1)/2} & \text{if }\chi_1\cdots\chi_n\neq\mathbf 1 \\
                                q^{(n-2)/2} & \text{if }\chi_1\cdots\chi_n=\mathbf 1
                               \end{array}\right.
$$

Inspired by a question from Ivan Meir, in this article we study, more generally, sums of the form (\ref{sum}) where $L\subseteq\AAA^n_k$ is a sufficiently general affine subspace of arbitrary dimension. Before stating the main results, let us give a suitable definition of ``sufficiently general''. For any $i=1,\ldots,n$ let $H_i\subseteq\AAA^n_k$ be the hyperplane defined by the equation $x_i=0$ and, for every subset $I\subseteq\{1,\ldots,n\}$, denote by $H_I$ the intersection $\cap_{i\in I}H_i$. By convention, we set $\dim(\emptyset)=-1$.

\begin{defn}
Let $L\subseteq\AAA^n_k$ be an affine subspace of dimension $d$.
\begin{enumerate}
 \item We say that $L$ is in {\bf general position} (with respect to the coordinate hyperplanes) if, for every $I\subseteq\{1,\ldots,n\}$ with $|I|\leq d+1$, we have $\dim(L\cap H_I)=d-|I|$.
 \item We say that $L$ is in {\bf general position among its translates} if, for every $I\subseteq\{1,\ldots,n\}$ with $|I|\leq d+1$, we have $\dim(L\cap H_I)\leq d-|I|$ (so either $\dim(L\cap H_I)= d-|I|$ or $L\cap H_I=\emptyset$).
 \end{enumerate}
\end{defn}

Here is some justification for the chosen terminology. Consider the (affine) Grassmanian variety $G(n,d)$ parameterizing all $d$-dimensional affine subspaces of $\AAA^n_k$. The functions $f_I:G(n,d)\to\mathbb Z$ given by $f_I(L)=\dim(L\cap H_I)$ define a stratification of $G(n,d)$ by locally closed subsets, such that all functions $f_I$ are constant in each stratum. Then the subspaces in general position are those that belong to the dense open stratum. Similarly, $L$ is in general position among its translates if it is in the dense open stratum of the restriction of the stratification to the closed subset of $G(n,d)$ consisting of all translates of $L$.

If $A\cdot\mathbf x=\mathbf b$ is a system of linear equations defining $L$, where $A$ is an $m\times n$ matrix of maximal rank (with $m=n-d$), then $L$ is in general position if and only if all $m\times m$ minors of the augmented matrix $(A|\mathbf b)$ are non-zero, and $L$ is in general position among its translates if and only if $\mathbf b$ is not in any proper subspace of $k^m$ generated by a subset of the columns of $A$. In particular, for a fixed $A$, there is a dense open set of $\mathbf b\in k^m$ such that the affine subspace defined by $A\cdot\mathbf x=\mathbf b$ is in general position among its translates.

For example, if $d=0$ (i.e. $L=\{P\}$ is a single point) then $L$ is in general position if and only if it is in general position among its translates, if and only if all coordinates of $P$ are non-zero. If $d=1$ (i.e. $L$ is a line) then $L$ is in general position among its translates if and only if all points of $L$ have at most one coordinate equal to $0$, and it is in general position if, additionally, it is not parallel to any of the coordinate hyperplanes. If $L$ is a hyperplane with equation $a_1x_1+\cdots+a_nx_n=b$ then $L$ is in general position among its translates if and only if $b\neq 0$, and it is in general position if, additionally, all $a_i\neq 0$.

For any $L\subseteq\AAA^n_k$, we define the sum
$$
S(L;\chi_1,\ldots,\chi_n):=\sum_{\mathbf x\in L(k)}\chi_1(x_1)\cdots\chi_n(x_n)
$$
and, more generally, for every finite extension $k_r$ of $k$ of degree $r$,
$$
S_r(L;\chi_1,\ldots,\chi_n):=\sum_{\mathbf x\in L(k_r)}\chi_1(\Nm_{k_r/k}(x_1))\cdots\chi_n(\Nm_{k_r/k}(x_n))
$$
and we construct the associated $L$-function
$$
L(L;\chi_1,\ldots,\chi_n):=\exp\sum_{r=1}^\infty \frac{T^r}{r}S_r(L;\chi_1,\ldots,\chi_n).
$$

The main result of this article is the following
\begin{thm}\label{thm1}
Suppose that $L$ is in general position among its translates. Then $L(L;\chi_1,\ldots,\chi_n)^{(-1)^d}$ is a polynomial, whose degree is given by
$$
D_L:=(-1)^d+\sum_{j=1}^d (-1)^{d+j}a_j
$$
where $a_j$ is the number of subsets $I\subseteq\{1,\ldots,n\}$ with $|I|=j$ such that $L\cap H_I\neq\emptyset$, and all whose reciprocal roots are algebraic integers of absolute value $q^{i/2}$ for some $i\leq d$.
\end{thm}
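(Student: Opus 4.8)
The plan is to interpret the $L$-function cohomologically via Grothendieck's trace formula and the theory of $\ell$-adic sheaves. The character sums $S_r$ are traces of Frobenius acting on the cohomology of an appropriate rank-one sheaf on $L$. Specifically, each non-trivial multiplicative character $\chi_i$ gives, via the Lang torsor and norm compatibility, a rank-one Kummer sheaf $\LL_{\chi_i}$ on $\Gm$, and pulling back along the $i$-th coordinate $x_i:L\to\AAA^1$ yields a lisse rank-one sheaf on the open locus $U=L\setminus\bigcup_i H_i$ where all coordinates are non-zero. Let $\FF:=\bigotimes_{i=1}^n x_i^\star\LL_{\chi_i}$ on $U$, and let $j:U\hookrightarrow L$ be the open immersion. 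By the Grothendieck--Lefschetz trace formula,
$$
S_r(L;\chi_1,\ldots,\chi_n)=\sum_{w\geq 0}(-1)^w\Tr(\mathrm{Frob}_{k_r}\mid \HHH^w_c(U_{\bar k},\FF)),
$$
so that $L(L;\chi_1,\ldots,\chi_n)=\prod_{w}\det(1-T\,\mathrm{Frob}\mid \HHH^w_c(U_{\bar k},\FF))^{(-1)^{w+1}}$. The sign $(-1)^d$ in the statement signals that the top-dimensional cohomology $\HHH^{2d}_c$ and, more importantly, a single middle-degree group will carry the polynomial, while the rest must be shown to vanish.

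**The key steps, in order.** First I would establish that $\FF$ is a geometrically nontrivial rank-one local system on $U$: because each $\chi_i$ is nontrivial, $\FF$ is nontrivial along every boundary divisor $H_i\cap L$, and the general-position-among-translates hypothesis guarantees that $U$ is nonempty and that the boundary $L\setminus U$ is a union of hyperplane sections in ``combinatorially generic'' position inside the $d$-dimensional affine space $L$. Second, the central cohomological vanishing: I would show that $\HHH^w_c(U_{\bar k},\FF)=0$ for all $w\neq d$. The natural tool is the theory of perverse sheaves and Artin--Grothendieck vanishing, combined with the fact that a nontrivial Kummer sheaf has no invariants or coinvariants; concretely, I expect to argue by fibering $U$ over a lower-dimensional base (projecting along one coordinate direction inside $L$) and inducting on $d$, using the Leray spectral sequence together with the one-variable vanishing for nontrivial multiplicative character sums as the base case $d=1$ (and the classical Jacobi/Gauss computation giving $d=0,1$). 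Third, with cohomology concentrated in degree $d$, we obtain $L(L;\ldots)^{(-1)^d}=\det(1-T\,\mathrm{Frob}\mid \HHH^d_c(U_{\bar k},\FF))$, a genuine polynomial. Fourth, I would compute its degree $\dim \HHH^d_c = (-1)^d\chi_c(U_{\bar k},\FF)$ via the Euler characteristic. Since $\FF$ is lisse of rank $1$ on $U$, $\chi_c(U,\FF)=\chi_c(U)$, and by inclusion--exclusion over the open strata of $L$ cut out by the coordinate hyperplanes one obtains exactly $\chi_c(U)=\sum_{j}(-1)^{j}(\text{contributions of the }a_j\text{ nonempty intersections }L\cap H_I)$, matching the stated $D_L=(-1)^d+\sum_{j=1}^d(-1)^{d+j}a_j$ after accounting for signs. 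Fifth, the weight bound $|q^{i/2}|$ with $i\leq d$ follows from Deligne's Weil~II: $\HHH^d_c(U,\FF)$ is mixed of weights $\leq d$, and since $\FF$ is pure of weight $0$, the reciprocal roots are algebraic integers of absolute value $q^{i/2}$ with $i\leq d$.

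**The hard part** will be the cohomological vanishing $\HHH^w_c(U_{\bar k},\FF)=0$ for $w<d$, i.e. showing concentration in a single degree. The obstruction is that $U$ is a general affine space minus several hyperplanes, so its lower compactly-supported cohomology need not vanish for the constant sheaf; the vanishing is a genuine feature of the \emph{twist} by the nontrivial $\FF$. I expect to handle this by an affine fibration argument: choose a coordinate projection $\pi:L\to L'$ onto a $(d-1)$-dimensional subspace such that the generic fibre is an affine line meeting the boundary in the expected number of points, apply the relative Lefschetz/Leray spectral sequence, and use that $\R\pi_!\FF$ has fibres computed by one-variable character sums along lines — each of which, by the hypothesis and the nontriviality of the relevant character, is concentrated in degree $1$ with no $\HHH^0_c$ or $\HHH^2_c$. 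Controlling the finitely many \emph{special} fibres (where the line becomes tangent to or contained in a coordinate hyperplane) is the delicate point; the general-position-among-translates hypothesis is precisely what bounds their contribution and keeps the induction clean. An alternative, possibly cleaner, route is to invoke that $\FF$ is (up to shift) a perverse sheaf on $U$ whose $!$-pushforward to a point is concentrated in degree $d$ because $U$ is affine of dimension $d$ (Artin vanishing gives $\HHH^w_c=0$ for $w<d$ automatically once one knows $\FF[d]$ is perverse and has no subquotient supported in lower dimension), reducing the whole problem to verifying perversity and the absence of punctual or lower-dimensional constituents — which again rests on the genericity hypothesis.
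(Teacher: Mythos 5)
Your overall frame (Grothendieck--Lefschetz, concentration of $\HHH^\bullet_c$ in degree $d$, Euler characteristic for the degree, Weil II for the weights) matches the paper, but there is a genuine gap at the central step, and you have misdiagnosed where the difficulty sits. You declare the hard part to be $\HHH^w_c(U_{\bar k},\FF)=0$ for $w<d$; in fact that half is essentially free: $j_!\FF[d]$ is perverse on the affine variety $L$ (open immersion of the smooth locus where $\FF$ is lisse, $j$ affine), so Artin vanishing for compactly supported cohomology of perverse sheaves on affines gives $\HHH^w_c=0$ for $w<d$ with no hypothesis on $L$ beyond $U\neq\emptyset$. The real content is the vanishing for $w>d$, and neither of your two routes reaches it. Artin vanishing only gives one inequality, and the dual inequality concerns $\HHH^i(U,\FF^\vee)$ for $i<d$, which certainly does not vanish for the constant sheaf (your parenthetical claim that the \emph{lower} $\HHH^\bullet_c$ of the constant sheaf on $U$ need not vanish is backwards: it always vanishes for smooth affine $U$; it is $\HHH^{2d}_c$ that survives and must be killed by the twist). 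Your fibration-and-induction sketch could in principle attack $w>d$, but as written it does not close: the pushforward $\R\pi_!\FF$ over the special locus (fibres contained in, or parallel to, coordinate hyperplanes) is no longer of the product Kummer form, is not lisse there, and you give no mechanism for why its contributions to $\HHH^{>d}_c$ vanish --- this is exactly the point where ``general position among its translates'' must do quantitative work, and invoking it by name is not a proof.

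The paper's mechanism is genuinely different and is the missing idea: it realizes $\R\Gamma_c(L,\LL_{\chi_1}\boxtimes\cdots\boxtimes\LL_{\chi_n})$ as the stalk at $\mathbf b$ of $K=\R A_!(\LL_{\chi_1}\boxtimes\cdots\boxtimes\LL_{\chi_n})$ on the parameter space $\AAA^m_k$ of all translates of $L$, proves that $K[n]$ is perverse by exhibiting it (via proper base change, the projection formula and K\"unneth) as the Fourier transform of the explicit perverse sheaf $\LL_{\overline\chi_1(\mathbf a_1)}\otimes\cdots\otimes\LL_{\overline\chi_n(\mathbf a_n)}[m]$, and then proves by a local acyclicity argument on the projective closure (the case analysis at infinity is where general position among translates is actually used) that every cohomology sheaf $\HH^i(K)$ is lisse at $\mathbf b$. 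Perversity forces all $\HH^i(K)$ with $i\neq d$ to be supported on proper closed subsets, and a sheaf lisse at $\mathbf b$ with such support has zero stalk there; this kills both $w<d$ and $w>d$ at once. You would need either to reproduce this global-over-the-parameter-space argument or to supply a genuinely complete inductive fibration argument for $w>d$; at present neither is in your proposal. A secondary, smaller gap: the identity $\chi_c(U,\FF)=\chi_c(U)$ for a lisse rank-one sheaf is not automatic in characteristic $p$ (wild ramification along the boundary can break it); the paper proves it for these tame Kummer products by an induction on $n$ reducing to Grothendieck--Ogg--Shafarevich on one-dimensional fibres, and you should not assert it without that argument.
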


If $L$ is in general position, we can give a more precise result:

\begin{thm}\label{thm2}
Suppose that $L$ is in general position. Then $L(L;\chi_1,\ldots,\chi_n)^{(-1)^d}$ is a polynomial of degree ${n-1}\choose{d}$, all whose reciprocal roots are algebraic integers. If $\chi_1\cdots\chi_n$ is non-trivial, then all of its reciprocal roots have absolute value $q^{d/2}$. If $\chi_1\cdots\chi_n$ is trivial, then ${n-2}\choose d$ of its reciprocal roots have absolute value $q^{d/2}$ and ${n-2}\choose{d-1}$ of them have absolute value $q^{(d-1)/2}$.
\end{thm}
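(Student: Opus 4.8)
The plan is to pass to $\ell$-adic cohomology. Writing $\LL_{\chi_i}$ for the Kummer sheaf on $\Gm$ attached to $\chi_i$ and $\ell_i=x_i|_L$ for the restriction of the $i$-th coordinate to $L\cong\AAA^d$, the function $\mathbf x\mapsto\prod_i\chi_i(\Nm_{k_r/k}(x_i))$ is the Frobenius trace function of the lisse rank one sheaf $\FF:=\bigotimes_i\ell_i^\ast\LL_{\chi_i}$, pure of weight $0$, on $U:=L\cap\Gm^n$, the complement in $L$ of the $n$ hyperplanes $L\cap H_i$. By the Grothendieck--Lefschetz formula $S_r=\sum_i(-1)^i\Tr(F^r\mid\HHH^i_c(U_{\bar k},\FF))$, where $F$ is the geometric Frobenius. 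Since in general position $a_j=\binom nj$, Theorem~\ref{thm1} already gives that $L(L;\chi_1,\ldots,\chi_n)^{(-1)^d}$ is a polynomial of degree $D_L=\sum_{j=0}^d(-1)^{d+j}\binom nj=\binom{n-1}{d}$ with reciprocal roots algebraic integers; what remains is to determine the archimedean absolute values of the Frobenius eigenvalues, that is, the weights of $\HHH^d_c(U_{\bar k},\FF)$ (this group being the only contributor, by Artin vanishing below degree $d$ and the computation to follow above it).

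The key input is a purity principle for generic arrangements, which I would isolate as a lemma. Let $W\subseteq\PP^e$ be the complement of an arrangement of $m\geq 1$ hyperplanes in general position, and $\GGG$ a lisse rank one sheaf on $W$, pure of weight $0$, whose local monodromy around every boundary component is non-trivial. For a rank one sheaf a non-trivial local monodromy has neither invariants nor coinvariants, and since the boundary is a normal crossings divisor the same holds at every crossing; hence, for the open immersion $j\colon W\hookrightarrow\PP^e$, the natural maps $j_!\GGG\to j_{!*}\GGG\to Rj_\ast\GGG$ are all isomorphisms. Consequently $\HHH^\bullet_c(W,\GGG)\cong\HHH^\bullet(W,\GGG)$, so by Artin vanishing on the affine variety $W$ the cohomology is concentrated in degree $e$; and as $j_{!*}(\GGG[e])$ is the middle extension of a pure perverse sheaf it is pure, so by Weil~II $\HHH^e_c(W,\GGG)$ is pure of weight $e$. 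Its dimension is $(-1)^e\chi_c(W)=\binom{m-2}{e}$, computed from $\chi_c(W)=\sum_{j=0}^e(-1)^j\binom{m-1}{j}$ since $\GGG$ is everywhere tame.

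Now compactify $L\cong\AAA^d$ to $\PP^d$. By the general position hypothesis the $n$ closures $\overline{L\cap H_i}$ and the hyperplane at infinity $H_\infty$ form a general position arrangement of $n+1$ hyperplanes; the monodromy of $\FF$ around $\overline{L\cap H_i}$ is $\chi_i\neq\mathbf 1$, and around $H_\infty$, where each $\ell_i$ has a simple pole, it is $(\chi_1\cdots\chi_n)^{-1}$. If $\chi_1\cdots\chi_n\neq\mathbf 1$ then all $n+1$ monodromies are non-trivial and the lemma applies to $U$ with $(m,e)=(n+1,d)$: $\HHH^d_c(U_{\bar k},\FF)$ is pure of weight $d$ of dimension $\binom{n-1}{d}$, so every reciprocal root has absolute value $q^{d/2}$. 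If $\chi_1\cdots\chi_n=\mathbf 1$ the monodromy around $H_\infty$ is trivial and $\FF$ extends to a lisse $\tilde\FF$ on $\tilde U:=\PP^d\setminus\bigcup_i\overline{L\cap H_i}$. I would then use the excision sequence for the closed immersion $\tilde U\cap H_\infty\hookrightarrow\tilde U$ with open complement $U$,
$$\cdots\to\HHH^i_c(U,\FF)\to\HHH^i_c(\tilde U,\tilde\FF)\to\HHH^i_c(\tilde U\cap H_\infty,\tilde\FF|_{H_\infty})\to\HHH^{i+1}_c(U,\FF)\to\cdots.$$
Here $\tilde U$ and $\tilde U\cap H_\infty$ are complements of $n$ generic hyperplanes in $\PP^d$ and $\PP^{d-1}$ with all monodromies $\chi_i\neq\mathbf 1$, so the lemma gives that $\HHH^\bullet_c(\tilde U,\tilde\FF)$ is pure of weight $d$, concentrated in degree $d$, of dimension $\binom{n-2}{d}$, while $\HHH^\bullet_c(\tilde U\cap H_\infty,\tilde\FF|_{H_\infty})$ is pure of weight $d-1$, concentrated in degree $d-1$, of dimension $\binom{n-2}{d-1}$. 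The sequence collapses to a short exact sequence of Frobenius modules $0\to\HHH^{d-1}_c(\tilde U\cap H_\infty,\tilde\FF|_{H_\infty})\to\HHH^d_c(U_{\bar k},\FF)\to\HHH^d_c(\tilde U,\tilde\FF)\to 0$, exhibiting $\binom{n-2}{d-1}$ reciprocal roots of absolute value $q^{(d-1)/2}$ and $\binom{n-2}{d}$ of absolute value $q^{d/2}$, as claimed.

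The main obstacle is the purity lemma, and inside it two points deserve care: that ``general position'' (and not merely ``general position among its translates'') is exactly what guarantees that $\{\overline{L\cap H_i}\}_i\cup\{H_\infty\}$ is a normal crossings divisor in $\PP^d$, including its behavior along $H_\infty$; and the identification of the monodromy at infinity as $(\chi_1\cdots\chi_n)^{-1}$, which is what splits the argument into the two cases. The assertion that a non-trivial rank one local monodromy annihilates invariants and coinvariants at every stratum --- forcing $j_!\FF\cong Rj_*\FF$, hence simultaneously the concentration in a single degree and the purity --- is the technical heart. The numerology is then routine, resting on the binomial identity $\sum_{j=0}^e(-1)^j\binom{m}{j}=(-1)^e\binom{m-1}{e}$ and on tameness of the Kummer sheaves.
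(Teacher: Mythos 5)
Your argument is correct in substance but takes a genuinely different route from the paper. The paper proves Theorem \ref{thm2} by induction on $n$: it projects $L$ to the last coordinate, writes $\R\pi_{L!}(\LL_{\chi_1}\boxtimes\cdots\boxtimes\LL_{\chi_n})\cong\LL_{\chi_n}\otimes K$ with $K$ the pushforward of the first $n-1$ Kummer factors, and then carries out a one-variable analysis of $\FF=\HH^{d-1}(K)$ on $\AAA^1$ --- smoothness outside a finite set, absence of punctual sections via perversity, and above all the local monodromy at $0$ and $\infty$, the computation at $\infty$ being where $\chi_1\cdots\chi_n$ enters; a separate lemma is needed for the degenerate case where all $\chi_i$ equal a $\chi$ with $\chi^{n-1}=\mathbf 1$, which the induction cannot reach directly. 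You instead compactify $L\cong\AAA^d$ to $\PP^d$ and treat $U=L\setminus H$ globally as the complement of the general-position arrangement $\{\overline{L\cap H_i}\}_i\cup\{H_\infty\}$, reducing everything to one purity lemma ($j_!\to\R j_*$ an isomorphism, hence middle-degree concentration by Artin vanishing and purity by Weil II); this is symmetric in the $\chi_i$, avoids the special case entirely, and the excision sequence along $H_\infty$ produces the two weight strata of sizes $\binom{n-2}{d}$ and $\binom{n-2}{d-1}$ very transparently. Three points deserve care. First, your lemma as stated for an arbitrary lisse rank-one $\GGG$ is slightly too loose: the local identification $\R j_*\cong j_!$ at a crossing needs $\GGG$ to decompose there as an external product of Kummer sheaves (non-trivial monodromy around each component does not by itself give the K\"unneth-type statement for $\R j_*$ in the wild case); this is automatic in your application, since $\FF=\bigotimes_i\ell_i^\ast\LL_{\chi_i}$ and the $\ell_i$ vanishing at the point extend to a coordinate system, but it should be said. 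Second, the claim that the $n+1$ hyperplanes are in general position in $\PP^d$ requires a short verification at infinity from $\dim(L\cap H_I)=d-|I|$ (one checks that the direction spaces $\vec L\cap\vec H_I$ have dimension $d-|I|$ for $|I|\le d$ and vanish for $|I|=d+1$); as you note, this is exactly where full general position, rather than general position among translates, is indispensable --- the paper's fibration argument is what permits the weaker hypothesis in Theorems \ref{thm1} and \ref{thm3}. Third, your dimension counts tacitly use that a tame lisse sheaf has the same Euler characteristic as the constant sheaf; the paper proves the instance it needs by its fibration induction, so you should either invoke that computation (Theorem \ref{thm1} already gives the total degree $\binom{n-1}{d}$) or cite an Euler--Poincar\'e result for tame sheaves.
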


This matches the known cases where $d=n-1$ (classical Jacobi sums) or $d=1$ (one-variable polynomial sums, cf. \cite[\S 1]{katz2002estimates} for the case where all $\chi_i$ are equal). Since the sums $S(L;\chi_1,\ldots,\chi_n)$ can be written (up to sign) as the sum of the reciprocal roots of $L(L;\chi_1,\ldots,\chi_n)^{(-1)^d}$, as the main consequence of the theorems we get an estimate for them:

\begin{cor}
Suppose that $L$ is in general position among its translates. Then we have an estimate
$$
|S(L;\chi_1,\ldots,\chi_n)|\leq D_L\cdot q^{d/2}.
$$
If $L$ is in general position, we have
$$
|S(L;\chi_1,\ldots,\chi_n)|\leq {{n-1}\choose{d}}\cdot q^{d/2}
$$
if $\chi_1\cdots\chi_n$ is non-trivial, and
$$
|S(L;\chi_1,\ldots,\chi_n)|\leq {{n-2}\choose{d}}\cdot q^{d/2}+{{n-2}\choose{d-1}}\cdot q^{(d-1)/2}
$$
if $\chi_1\cdots\chi_n$ is trivial.
\end{cor}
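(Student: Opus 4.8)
The plan is to read off $S(L;\chi_1,\ldots,\chi_n)$ from the factorization of the $L$-function supplied by Theorems \ref{thm1} and \ref{thm2}, and then apply the triangle inequality together with the stated bounds on the absolute values of the reciprocal roots. Write $P(T):=L(L;\chi_1,\ldots,\chi_n)^{(-1)^d}$; by the theorems this is a polynomial, so we may factor it as $P(T)=\prod_{i}(1-\alpha_i T)$, where the $\alpha_i$ are precisely the reciprocal roots whose absolute values the theorems describe.

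First I would recover the sums $S_r$ from the $\alpha_i$ by comparing the two available descriptions of $\log P(T)$. On one hand, from the definition of the $L$-function,
$$\log P(T)=(-1)^d\log L(L;\chi_1,\ldots,\chi_n)=(-1)^d\sum_{r=1}^\infty\frac{T^r}{r}S_r(L;\chi_1,\ldots,\chi_n).$$
On the other hand, expanding the factorization,
$$\log P(T)=\sum_i\log(1-\alpha_i T)=-\sum_{r=1}^\infty\frac{T^r}{r}\sum_i\alpha_i^r.$$
Comparing coefficients of $T^r/r$ gives $S_r(L;\chi_1,\ldots,\chi_n)=(-1)^{d+1}\sum_i\alpha_i^r$; in particular, taking $r=1$,
$$S(L;\chi_1,\ldots,\chi_n)=(-1)^{d+1}\sum_i\alpha_i,$$
so that $|S(L;\chi_1,\ldots,\chi_n)|\leq\sum_i|\alpha_i|$ by the triangle inequality.

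It then remains only to insert the bounds. If $L$ is in general position among its translates, Theorem \ref{thm1} says $P$ has degree $D_L$ and each $|\alpha_i|=q^{i/2}$ for some $i\leq d$, hence $|\alpha_i|\leq q^{d/2}$ since $q\geq1$; summing $D_L$ terms yields $|S(L;\chi_1,\ldots,\chi_n)|\leq D_L\cdot q^{d/2}$. If $L$ is in general position, Theorem \ref{thm2} pins down the absolute values exactly: when $\chi_1\cdots\chi_n$ is non-trivial all $\binom{n-1}{d}$ reciprocal roots have absolute value $q^{d/2}$, giving the bound $\binom{n-1}{d}q^{d/2}$; when $\chi_1\cdots\chi_n$ is trivial, $\binom{n-2}{d}$ of them have absolute value $q^{d/2}$ and $\binom{n-2}{d-1}$ have absolute value $q^{(d-1)/2}$, giving $\binom{n-2}{d}q^{d/2}+\binom{n-2}{d-1}q^{(d-1)/2}$. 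As this deduction is entirely formal, I do not expect a genuine obstacle; the only point requiring minor care is keeping track of the sign $(-1)^d$ in the exponent and of the passage $|\alpha_i|=q^{i/2}\leq q^{d/2}$ in the first case, where the theorem only bounds the weights rather than fixing them.
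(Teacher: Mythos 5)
Your deduction is correct and is exactly the paper's argument: the author disposes of the corollary in one sentence by noting that $S(L;\chi_1,\ldots,\chi_n)$ is, up to sign, the sum of the reciprocal roots of $L(L;\chi_1,\ldots,\chi_n)^{(-1)^d}$, and then invokes the absolute-value bounds from Theorems \ref{thm1} and \ref{thm2} just as you do. The only cosmetic point is the clash between your root index $i$ and the weight exponent $i$ in the phrase ``absolute value $q^{i/2}$ for some $i\leq d$''; the mathematics is unaffected.
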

We also have the following variant, where $L$ is given by a parametrization:
\begin{cor}
 Let $L_1,\ldots,L_n:\AAA^d_k\to\AAA^1_k$ be affine linear forms, with $L_i(\mathbf t)=a_{i,1}t_1+\cdots+a_{i,d}t_d+b_i$, and let $V_i\subseteq \AAA^d_k$ be the hyperplane defined by $L_i(\mathbf t)=0$. Suppose that the affine map $\AAA^d_k\to\AAA^n_k$ defined by the $L_i$ is injective (that is, that the matrix $(a_{ij})$ has rank $d$), and that for every $I\subseteq\{1,\ldots,n\}$ with $|I|\leq d+1$ we have $\dim\left(\bigcap_{i\in I}V_i\right)\leq d-|I|$. Then we have an estimate
 $$
 \left|\sum_{\mathbf t\in k^d}\chi_1(L_1(\mathbf t))\cdots\chi_n(L_n(\mathbf t))\right|\leq D_L\cdot q^{d/2}
 $$
 where $D_L:=(-1)^d+\sum_{j=1}^d (-1)^{d+j}a_j$ and $a_j$ is the number of subsets $I\subseteq\{1,\ldots,n\}$ with $|I|=j$ such that $\bigcap_{i\in I}V_i\neq\emptyset$.
\end{cor}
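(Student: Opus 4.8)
The plan is to deduce this corollary directly from Theorem \ref{thm1} (via its first corollary) by recognizing the parametrized sum as the subspace sum $S(L;\chi_1,\ldots,\chi_n)$ attached to the image of the parametrization. First I would introduce the affine map $\phi:\AAA^d_k\to\AAA^n_k$ given by $\phi(\mathbf t)=(L_1(\mathbf t),\ldots,L_n(\mathbf t))$ and set $L:=\phi(\AAA^d_k)$. Since the linear part of $\phi$, the matrix $(a_{ij})$, has rank $d$, the map $\phi$ is injective; being an injective affine-linear map, it is an isomorphism onto its image $L$, which is therefore an affine subspace of $\AAA^n_k$ of dimension $d$, defined over $k$ because all the $L_i$ have coefficients in $k$. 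In particular $\phi$ restricts to a bijection $k^d=\AAA^d_k(k)\xrightarrow{\sim} L(k)$, and since the $i$-th coordinate of $\phi(\mathbf t)$ is exactly $L_i(\mathbf t)$, the left-hand sum equals $\sum_{\mathbf x\in L(k)}\chi_1(x_1)\cdots\chi_n(x_n)=S(L;\chi_1,\ldots,\chi_n)$.

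Next I would check that the hypotheses and combinatorial data match under this dictionary. For any $I\subseteq\{1,\ldots,n\}$ the preimage $\phi^{-1}(H_I)$ is precisely $\bigcap_{i\in I}V_i$, because $\phi^{-1}(H_i)$ is the locus $L_i(\mathbf t)=0$, that is, $V_i$. As $\phi$ is an isomorphism onto $L$, we have $L\cap H_I=\phi\left(\bigcap_{i\in I}V_i\right)$, whence $\dim(L\cap H_I)=\dim\left(\bigcap_{i\in I}V_i\right)$, with $L\cap H_I=\emptyset$ if and only if $\bigcap_{i\in I}V_i=\emptyset$. Therefore the assumption $\dim\left(\bigcap_{i\in I}V_i\right)\leq d-|I|$ for all $|I|\leq d+1$ says exactly that $L$ is in general position among its translates, and the integer $a_j$ counting the subsets $I$ of size $j$ with $\bigcap_{i\in I}V_i\neq\emptyset$ coincides with the $a_j$ of Theorem \ref{thm1}. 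Consequently $D_L$ denotes the same quantity in both statements.

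Finally, applying the first corollary to the subspace $L$ yields $|S(L;\chi_1,\ldots,\chi_n)|\leq D_L\cdot q^{d/2}$, which is the desired estimate. The argument is thus essentially a translation between the parametrized description of $L$ and its description as an affine subspace cut out by linear equations. The only point requiring a little care is verifying that $\phi$ induces a bijection on $k$-rational points, so that no point of $L(k)$ is omitted or counted with the wrong multiplicity; this is precisely where the rank hypothesis on $(a_{ij})$ enters. I do not anticipate any genuine obstacle beyond this bookkeeping, since all of the analytic content is already carried by Theorem \ref{thm1}.
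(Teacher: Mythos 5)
Your argument is correct and is exactly the translation the paper leaves implicit: the image of the injective affine parametrization is a $d$-dimensional affine subspace $L$ with $L\cap H_I=\phi\bigl(\bigcap_{i\in I}V_i\bigr)$, so the hypothesis is precisely ``general position among its translates'' and the first corollary applies. No issues.
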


Note that, even in the case where all $\chi_i$ are equal to the same character $\chi$, the subvariety of $\AAA^d_k$ defined by $L_1(\mathbf t)\cdots L_n(\mathbf t)=0$ is highly singular, so the estimates in \cite{katz2002estimates} and \cite{rojas2005estimates} for multi-variable multiplicative character sums do not give good bounds for these sums.

\section{Proof of Theorem \ref{thm1}}

In this section we will prove Theorem \ref{thm1}. We keep the same notation as in the previous section, in particular, we fix a finite field $k=\Fq$ of characteristic $p$, and $n$ non-trivial multiplicative characters $\chi_1,\ldots,\chi_n:\Fq^\times\to\CC^\times$. Let $L\subseteq\AAA^n_k$ be an affine subspace of dimension $d$, defined as the solution set of the linear system
$$
A\cdot\mathbf{x}=\mathbf {b}
$$
where $A$ is an $m\times n$ matrix of maximal rank $m:=n-d$. 

Fix a prime $\ell\neq p$. We will work on the category of $\ell$-adic constructible sheaves on $\AAA^m_k$ and, more generally, its derived category $\Dbc(\AAA^m_k,\Ql)$. For every $i=1,\ldots,n$ there is a rank 1, smooth sheaf $\LL_{\chi_i}$ on $\Gmk$ (the Kummer sheaf) whose Frobenius trace at a point $t\in\Gm(k_r)=k_r^\times$ is $\chi_i(\Nm_{k_r/k}(t))$ \cite{deligne569application}. We will also denote by $\LL_{\chi_i}$ its extension by zero to $\AAA^1_k$. Consider the product $\LL_{\chi_1}\boxtimes\cdots\boxtimes\LL_{\chi_n}$ on $\AAA^n_k$, then the sum of the traces of the Frobenius action on its stalks at the points of $L(k_r)$ is precisely $S_r(L;\chi_1,\ldots,\chi_n)$, so $L(L;\chi_1,\ldots,\chi_n)$ is the $L$-function of the sheaf $\LL_{\chi_1}\boxtimes\cdots\boxtimes\LL_{\chi_n}$ on $L$. By the Grothendieck-Lefschetz trace formula we get
$$
L(L;\chi_1,\ldots,\chi_n)=\prod_{i=0}^{2d}\det(1-T\cdot F|\HHH^i_c(L\otimes\bar k,\LL_{\chi_1}\boxtimes\cdots\boxtimes\LL_{\chi_n}))^{(-1)^{i+1}}
$$
where $L\otimes\bar k\subseteq\AAA^n_{\bar k}$ is obtained from $L$ by extension of scalars, so Theorem \ref{thm1} is then a consequence of the following result, since $\LL_{\chi_1}\boxtimes\cdots\boxtimes\LL_{\chi_n}$ is pure of weight $0$ (so $\HHH^i_c(L,\LL_{\chi_1}\boxtimes\cdots\boxtimes\LL_{\chi_n})$ is mixed of weights $\leq i$ by \cite{deligne1980conjecture}).

\begin{thm}\label{thm3}
Suppose that $L$ is in general position among its translates. Then $\HHH^i_c(L\otimes\bar k,\LL_{\chi_1}\boxtimes\cdots\boxtimes\LL_{\chi_n})=0$ for $i\neq d$, and $\HHH^d_c(L\otimes\bar k,\LL_{\chi_1}\boxtimes\cdots\boxtimes\LL_{\chi_n})$ has dimension $D_L$.
\end{thm}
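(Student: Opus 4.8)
The plan is to reduce Theorem \ref{thm3} to a cohomology computation for a rank one \emph{tame} local system on the complement of a hyperplane arrangement, to pin down the dimension by a combinatorial Euler characteristic, and to obtain the concentration in a single degree by an inductive argument along a generic linear projection. First I would observe that, working over $\bar k$, the restriction $\LL_{\chi_1}\boxtimes\cdots\boxtimes\LL_{\chi_n}|_L$ is the extension by zero $j_!\FF$ of a rank one lisse sheaf $\FF$ on the open set $U:=L\cap\Gm^n$, which is the complement in $L\cong\AAA^d$ of the arrangement of the hyperplanes $L\cap H_i$ with $L\cap H_i\neq\emptyset$; here one uses that each $\LL_{\chi_i}$ lives on $\Gm$ and is extended by zero to $\AAA^1$. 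Thus $\HHH^i_c(L,\LL_{\chi_1}\boxtimes\cdots\boxtimes\LL_{\chi_n})=\HHH^i_c(U,\FF)$. Since $U$ is smooth and affine of dimension $d$ and $\FF$ is lisse, Poincar\'e duality together with Artin's affine vanishing theorem give $\HHH^i_c(U,\FF)=0$ for $i<d$ at once (and trivially for $i>2d$); the whole problem is therefore the vanishing in the range $d<i\leq 2d$.

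Next I would fix the dimension count. As $\FF$ is a rank one tame local system (each $\chi_i$ has order prime to $p$), the compactly supported Euler characteristic satisfies $\chi_c(U,\FF)=\chi_c(U)$, and an inclusion--exclusion over the intersection poset gives
\[
\chi_c(U)=\sum_{I\subseteq\{1,\dots,n\}}(-1)^{|I|}\chi_c(L\cap H_I)=\sum_{I:\,L\cap H_I\neq\emptyset}(-1)^{|I|}=\sum_{j=0}^d(-1)^j a_j=(-1)^d D_L,
\]
using that every nonempty $L\cap H_I$ is an affine space, hence has $\chi_c=1$, and that general position among translates forces $L\cap H_I=\emptyset$ as soon as $|I|>d$. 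Consequently, once concentration in degree $d$ is established, $\dim\HHH^d_c(U,\FF)=(-1)^d\chi_c(U,\FF)=D_L$ (in particular $D_L\geq 0$).

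For the remaining vanishing, by Poincar\'e duality it is equivalent to prove that the \emph{ordinary} cohomology $\HHH^j(U,\FF^{-1})$ vanishes for $j<d$, i.e. that the cohomology of the same type of local system $\FF^{-1}$ is concentrated in the top degree. I would prove this by induction on $d$ using a generic linear projection $p:\AAA^d\to\AAA^{d-1}$ (in the parametrized model where $L\cong\AAA^d$ and $\FF=\bigotimes_i L_i^*\LL_{\chi_i}$), chosen so that at least one of the forms $L_i$ is non-constant along every fibre line. On each fibre the restriction of $j_!\FF$ is then the extension by zero of a rank one tame sheaf vanishing at a nonempty finite set, so $\R^0p_*(j_!\FF)=p_*(j_!\FF)=0$; and $\R^2p_*(j_!\FF)=0$ because $p$ is affine with one-dimensional fibres. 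Hence $\R p_*(j_!\FF)=\GGG[-1]$ is concentrated in degree $1$, with $\GGG:=\R^1p_*(j_!\FF)$, so that $\HHH^j(\AAA^d,j_!\FF)=\HHH^{j-1}(\AAA^{d-1},\GGG)$; it then remains to see that the cohomology of $\GGG$ on $\AAA^{d-1}$ is concentrated in its top degree $d-1$.

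The hard part will be precisely this last point. The sheaf $\GGG=\R^1p_*(j_!\FF)$ has generic rank $|A|-1$, where $A=\{i:L_i\text{ is non-constant along the fibres}\}$, so it is not of the rank one Kummer type to which the naive induction would apply; moreover it acquires singularities exactly over the locus where the finite ramification points of the fibrewise sheaf collide or escape to infinity, and where the fibrewise monodromy---governed by the partial products $\prod_{i\in S}\chi_i$ over colliding hyperplanes and by $\prod_i\chi_i$ at infinity---degenerates. To run the induction I would therefore enlarge the inductive statement to a class of constructible sheaves on affine spaces that is stable under $\R^1p_*$ along generic projections and whose members have cohomology concentrated in the top degree, and then verify, using the general-position-among-translates hypothesis, that $\GGG$ belongs to this class: concretely, that the bad locus has sufficiently small dimension and that the nontriviality of each $\chi_i$ keeps the fibrewise top cohomology (equivalently the fibrewise geometric coinvariants) from contributing. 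Controlling this degeneration, and checking that the general position condition is inherited by the successive direct images, is the technical heart of the argument; everything else is formal.
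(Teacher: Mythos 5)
Your reduction to $\HHH^i_c(U,\FF)$ for the rank one lisse sheaf $\FF$ on $U=L\setminus\bigcup_i H_i$, the vanishing for $i<d$ via Artin vanishing plus Poincar\'e duality, and the Euler characteristic computation are all correct (for the last point you invoke the general fact that a tame rank one lisse sheaf has $\chi_c(U,\FF)=\chi_c(U)$ in any dimension, a citable but nontrivial input that the paper instead obtains by an elementary induction applying Grothendieck--Ogg--Shafarevich fibre by fibre). The problem is that the remaining vanishing, $\HHH^i_c(U,\FF)=0$ for $d<i\leq 2d$, is the entire content of the theorem, and your proposal does not prove it: the final paragraph, where you propose to ``enlarge the inductive statement to a class of constructible sheaves\dots whose members have cohomology concentrated in the top degree'' and to verify that $\GGG=\R^1p_*(j_!\FF)$ lies in it, is a restatement of the goal rather than an argument. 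Identifying such a class, showing it is stable under generic projection, and showing membership of $\GGG$ (whose singularities over the collision/escape locus and whose local monodromy governed by the partial products $\prod_{i\in S}\chi_i$ are exactly where general position and nontriviality of the $\chi_i$ must be used) is the theorem, and no mechanism is offered for it. Note also that concentration in middle degree does not follow from perversity of $j_!\FF[d]$ on the affine $L$ alone: Artin vanishing for perverse sheaves gives only the half you already have.

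The paper's mechanism, which is what your sketch is missing, is to avoid fibering $L$ itself and instead consider the whole family of translates: it forms $K=\R A_!(\LL_{\chi_1}\boxtimes\cdots\boxtimes\LL_{\chi_n})$ on $\AAA^m_k$, shows $K[n]$ is perverse by exhibiting it as the Fourier transform of an explicit perverse object $\LL_{\overline\chi_1(\mathbf a_1)}\otimes\cdots\otimes\LL_{\overline\chi_n(\mathbf a_n)}[m]$, and then proves by a local acyclicity computation on the compactified family (this is where ``general position among its translates'' is used, including an analysis at the hyperplane at infinity) that all cohomology sheaves of $K$ are lisse at $\mathbf b$. Perversity forces every $\HH^j(K)$ with $j>d$ to be supported on a proper closed subset, so lissity at $\mathbf b$ kills its stalk there; this is what yields the vanishing in degrees above $d$ that your induction leaves open. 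If you want to salvage your fibrewise approach, you would essentially have to reprove this generic-base-smoothness-plus-perversity statement for the successive direct images, which is not easier.
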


Let $K=\R A_!(\LL_{\chi_1}\boxtimes\cdots\boxtimes\LL_{\chi_n})\in\Dbc(\AAA^m_k,\Ql)$, where $A$ is viewed as a linear map $\AAA^n_k\to\AAA^m_k$. Then $\HHH^i_c(L\otimes\bar k,\LL_{\chi_1}\boxtimes\cdots\boxtimes\LL_{\chi_n})$ is the stalk of $\HH^i(K)$ at $\mathbf b$. Recall that an object $P$ in $\Dbc(\AAA^m_k,\Ql)$ is called \emph{perverse} if the dimension of the support of the cohomology sheaves $\HH^i(P)$ and $\HH^i(DP)$ is $\leq -i$ for every $i\in{\mathbb Z}$, where $DP$ is the Verdier dual of $P$ \cite{beilinson1982faisceaux}.

\begin{proposition} The shifted object $K[n]\in\Dbc(\AAA^m_k,\Ql)$ is perverse.
 \end{proposition}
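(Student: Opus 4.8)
The plan is to prove perversity by transporting the problem through the Fourier--Deligne transform $\mathrm{FT}$, which is $t$-exact for the perverse $t$-structure (Katz--Laumon) and converts pushforward along the surjection $A$ into restriction to a linear subspace, where perversity becomes transparent. Throughout set $M:=(\LL_{\chi_1}\boxtimes\cdots\boxtimes\LL_{\chi_n})[n]$, so that $K[n]=\R A_! M$. First I would record that $M$ is perverse on $\AAA^n_k$: for each $i$ the inclusion $j_i\colon\Gm\hookrightarrow\AAA^1$ is an affine open immersion, hence $j_{i!}$ is $t$-exact, so the extension by zero $\LL_{\chi_i}[1]$ of the perverse Kummer sheaf on $\Gm$ is perverse on $\AAA^1$; and an external product of perverse sheaves is perverse. (One half of the assertion, $K[n]\in{}^pD^{\ge0}$, is in fact automatic, since $A$ is affine and $\R A_!$ is left $t$-exact; the content is the opposite inequality, which the Fourier argument supplies uniformly.)

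Next I would compute $\hat M:=\mathrm{FT}_{\AAA^n}(M)$. Since $\mathrm{FT}$ is multiplicative for external products (the additive character of $\AAA^n$ is the product of those of the factors), $\hat M\cong\boxtimes_{i=1}^n\mathrm{FT}_{\AAA^1}(\LL_{\chi_i}[1])$. Here the nontriviality of each $\chi_i$ is used: the Fourier transform of a nontrivial Kummer sheaf is again a clean Kummer sheaf, $\mathrm{FT}_{\AAA^1}(\LL_{\chi_i}[1])\cong\LL_{\overline\chi_i}[1]\otimes G_i$ for a Gauss-sum constant $G_i$, with no extra contribution supported at $0$. Hence $\hat M\cong j_!(\hat\LL)[n]$, where $j\colon\Gm^n\hookrightarrow\AAA^n$ and $\hat\LL=\LL_{\overline\chi_1}\boxtimes\cdots\boxtimes\LL_{\overline\chi_n}$ is lisse of rank $1$ on $\Gm^n$ (the Gauss-sum twist is an invertible constant, irrelevant to perversity). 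In particular $\hat M$ is again the extension by zero from the torus of a rank-one lisse sheaf.

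Now I would invoke the compatibility of $\mathrm{FT}$ with the linear map $A$: writing $\iota:=A^{t}$ for the transpose, a closed immersion of a linear subspace $Z\cong\AAA^m$ into $\AAA^n$, one has a canonical isomorphism $\mathrm{FT}_{\AAA^m}(\R A_! M)\cong\iota^*\hat M$ up to a shift by $d=\dim\ker A$ and a Tate twist. Proper base change in the cartesian square formed by $\iota$, $j$ and $j_Z\colon Z\cap\Gm^n\hookrightarrow Z$ gives $\iota^*\hat M\cong j_{Z!}\big(\hat\LL|_{Z\cap\Gm^n}\big)[n]$. The decisive point is that $Z\cap\Gm^n$ is the complement in $Z\cong\AAA^m$ of the hyperplane arrangement cut out by the coordinate functions, so $j_Z$ is an affine open immersion (or $Z\cap\Gm^n=\emptyset$, in which case everything vanishes); since $\hat\LL|_{Z\cap\Gm^n}[m]$ is perverse and $j_{Z!}$ is $t$-exact for an affine open immersion, $j_{Z!}\big(\hat\LL|_{Z\cap\Gm^n}\big)[m]$ is perverse on $Z$. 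Thus $\iota^*\hat M$ is a perverse sheaf shifted by $[-d]$, the shift and twist in the Fourier compatibility being exactly those that return $\mathrm{FT}_{\AAA^m}(K[n])$ to perverse degree $0$. As $\mathrm{FT}_{\AAA^m}$ is a $t$-exact equivalence, it reflects perversity, so $K[n]$ is perverse.

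The main obstacle is the bookkeeping in the last step: pinning down the precise shift and Tate twist in the Fourier--linear compatibility against the chosen normalization of $\mathrm{FT}$, and checking that they match the $[-d]$ by which $\iota^*\hat M$ differs from a perverse sheaf. Everything else is soft, namely the $t$-exactness of $\mathrm{FT}$ and of $j_{Z!}$, the computation of $\mathrm{FT}$ on Kummer sheaves, and base change. I would stress two structural features: the argument needs no general-position hypothesis, since the arrangement $Z\cap\bigcup H_i$ may be arbitrarily degenerate and only the affineness of $j_Z$ is used; and the nontriviality of the $\chi_i$ enters solely to keep $\hat M$ an extension by zero of a lisse sheaf, with no punctual contributions along the coordinate subspaces that would spoil the restriction to $Z$.
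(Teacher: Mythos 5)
Your proposal is correct and follows essentially the same route as the paper: both identify $K[n]$ (via Fourier transform) with the extension by zero, from the complement of the hyperplane arrangement $\{\mathbf a_i\cdot\mathbf t=0\}$ in $\AAA^m_k$, of a shifted rank-one lisse sheaf $\bigotimes_i\LL_{\overline\chi_i(\mathbf a_i)}[m]$, which is perverse because the open immersion is affine; your $\iota^\ast\hat M$ is exactly the paper's object $P$. The only cosmetic difference is that you invoke the general compatibility of $\mathrm{FT}$ with linear maps, where the paper verifies that compatibility by hand via proper base change, the projection formula and K\"unneth.
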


 \begin{proof}
 Let $\mathbf a_1,\ldots,\mathbf a_n\in k^m$ be the columns of $A$, and let $\LL_{\overline\chi_i(\mathbf a_i)}$ be the pull-back of the Kummer sheaf $\LL_{\overline\chi_i}$ via the linear form $\AAA^m_k\to\AAA^1_k$ given by ${\mathbf t}\mapsto \mathbf a_i\cdot{\mathbf t}$ (which we will also denote by $\mathbf a_i$). We define
 $$P:=\LL_{\overline\chi_1(\mathbf a_1)}\otimes\cdots\otimes\LL_{\overline\chi_n(\mathbf a_n)}[m]\in\Dbc(\AAA^m_k,\Ql).$$
  This is a perverse object: we have $P=j_!j^\ast P$, where $j:U\hookrightarrow\AAA^m_k$ is the inclusion of the dense open complement of the hyperplanes $\{A_i\cdot{\mathbf t}=0;\; i=1,\ldots,n\}$. Since $\LL_{\overline\chi_1(A_1)}\otimes\cdots\otimes\LL_{\overline\chi_n(A_n)}$ is smooth on $U$, $j^* P$ is perverse on $U$ by \cite[4.0]{beilinson1982faisceaux}, and so is $P$ by \cite[Corollaire 4.1.3]{beilinson1982faisceaux}, since $j$ is quasi-finite and affine.
  
  We will show that $K[n]$ is geometrically isomorphic to the Fourier transform \cite{laumon1987transformation} of $P$. Since the Fourier transform preserves perversity \cite[1.3.2.3]{laumon1987transformation}, this will prove the proposition. Equivalently, since the Fourier transform is (geometrically) involutive, we will show that the Fourier transform of $K[n]$ is geometrically isomorphic to $P$.
 
 Fix an additive character $\psi:k\to\CC$, with respect to which we will take the Fourier Transform of $K[n]$. This is defined as
 $$
 FT_\psi(K[n])=\R\pi_{2!}(\mu^\ast\LL_\psi\otimes\pi_1^\ast K[n])[m]
 $$
 where $\pi_1,\pi_2:\AAA^m_k\times\AAA^m_k\to\AAA^m_k$ are the projections, $\mu:\AAA^m_k\times\AAA^m_k\to\AAA^1_k$ is the multiplication map $\mu(\mathbf t,\mathbf u)=\mathbf t\cdot\mathbf u=\sum_{i=1}^m t_iu_i$ and $\LL_\psi$ is the Artin-Schreier smooth sheaf on $\AAA^1_k$ associated to $\psi$ \cite{deligne569application}. We have
 $$
 FT_\psi(K[n])[-n-m]=\R\pi_{2!}(\mu^\ast\LL_\psi\otimes\pi_1^\ast\R A_!(\LL_{\chi_1}\boxtimes\cdots\boxtimes\LL_{\chi_n}))\cong
 $$
 $$
 \cong\R\pi_{2!}(\mu^\ast\LL_\psi\otimes \R(A,Id)_!\tilde\pi_1^\ast(\LL_{\chi_1}\boxtimes\cdots\boxtimes\LL_{\chi_n}))\cong
 $$
 $$
 \cong\R\pi_{2!}\R(A,Id)_!(\tilde\mu^\ast\LL_\psi\otimes\tilde\pi_1^\ast(\LL_{\chi_1}\boxtimes\cdots\boxtimes\LL_{\chi_n}))\cong
 $$
 $$
 \cong\R\tilde\pi_{2!}(\tilde\mu^\ast\LL_\psi\otimes\tilde\pi_1^\ast(\LL_{\chi_1}\boxtimes\cdots\boxtimes\LL_{\chi_n}))
 $$
 by proper base change and the projection formula, where $\tilde\pi_1:\AAA^n_k\times\AAA^m_k\to\AAA^n_k$ and $\tilde\pi_2:\AAA^n_k\times\AAA^m_k\to\AAA^m_k$ are the projections and $\tilde\mu:\AAA^n_k\times\AAA^m_k\to\AAA^1_k$ is given by $(\mathbf x,\mathbf u)\mapsto (A\mathbf x)\cdot\mathbf u=\sum_{i=1}^n x_i(\mathbf a_i\cdot \mathbf u)$. In particular, $\tilde\mu^\ast\LL_\psi=\bigotimes_{i=1}^n\tilde\mu_i^\ast\LL_\psi$, where $\tilde\mu_i=\sigma_i(\varpi_i,Id):\AAA^n_k\times\AAA^m_k\to\AAA^1_k\times\AAA^m_k\to\AAA^1_k$, $\varpi_i:\AAA^n_k\to\AAA^1_k$ being the $i$-th projection and $\sigma_i:\AAA^1_k\times\AAA^m_k\to\AAA^1_k$ being given by $(x,\mathbf u)\mapsto x(\mathbf a_i\cdot \mathbf u)$. We then have
 $$
 FT_\psi(K[n])[-n-m]\cong\R\tilde\pi_{2!}(\tilde\mu_1^\ast\LL_\psi\otimes\cdots\otimes\tilde\mu_n^\ast\LL_\psi\otimes\tilde\pi_1^\ast(\LL_{\chi_1}\boxtimes\cdots\boxtimes\LL_{\chi_n}))\cong
 $$
 $$
 \cong\R\tilde\pi_{2!}((\varpi_1,Id)^\ast\sigma_i^\ast\LL_\psi\otimes\cdots\otimes(\varpi_n,Id)^\ast\sigma_n^\ast\LL_\psi\otimes(\varpi_1,Id)^\ast\hat\pi_1^\ast\LL_{\chi_1}\otimes\cdots\otimes(\varpi_n,Id)^\ast\hat  \pi_1^\ast\LL_{\chi_n})
 $$
 where $\hat\pi_1:\AAA^1_k\times\AAA^m_k\to\AAA^1_k$, $\hat\pi_2:\AAA^1_k\times\AAA^m_k\to\AAA^m_k$ are the projections. By the K\"unneth formula, this is
 $$
 \R\hat\pi_{2!}(\sigma_1^\ast\LL_\psi\otimes\hat\pi_1^\ast\LL_{\chi_1})\otimes\cdots\otimes\R\hat\pi_{2!}(\sigma_n^\ast\LL_\psi\otimes\hat\pi_1^\ast\LL_{\chi_n})
 $$
 so it only remains to show that
 \begin{equation}\label{remainstoshow}
 \R\hat\pi_{2!}(\sigma_i^\ast\LL_\psi\otimes\hat\pi_1^\ast\LL_{\chi_i})\cong\LL_{\overline\chi_i(\mathbf a_i)}[-1]=\mathbf a_i^\ast\LL_{\overline\chi_i}[-1]
 \end{equation}
 geometrically for every $i=1,\ldots,n$. If $\hat\mu:\AAA^1_k\times\AAA^1_k\to\AAA^1_k$ is the multiplication map, we can write
 $$
 \R\hat\pi_{2!}(\sigma_i^\ast\LL_\psi\otimes\hat\pi_1^\ast\LL_{\chi_i})=\R\hat\pi_{2!}((Id,\mathbf a_i)^\ast\hat\mu^\ast\LL_\psi\otimes\hat\pi_1^\ast\LL_{\chi_i})\cong
 $$
 $$
 \cong\R\hat\pi_{2!}((Id,\mathbf a_i)^\ast(\hat\mu^\ast\LL_\psi\otimes\pi_1^\ast\LL_{\chi_i}))\cong \mathbf a_i^\ast\R\pi_{2!}(\hat\mu^\ast\LL_\psi\otimes\pi_1^\ast\LL_{\chi_i})=\mathbf a_i^\ast FT_\psi(\LL_{\chi_i})[-1]
 $$
 again by proper base change, where $\pi_1,\pi_2$ are now the projections $\AAA^1_k\times\AAA^1_k\to\AAA^1_k$. Then (\ref{remainstoshow}) follows from the fact \cite[1.4.3.1]{laumon1987transformation} that the Fourier transform of $\LL_\chi$ is geometrically isomorphic to $\LL_{\overline\chi}$ for any non-trivial multiplicative character $\chi$.
 \end{proof}

 Next, we will show that, if $L$ (defined by $A\cdot\mathbf{x}=\mathbf {b}$) is in general position among its translates, then the cohomology sheaves of $K$ are smooth at $\mathbf b\in\AAA^m_k$.
 
\begin{proposition} \label{smooth}
 Let $\mathbf b\in k^m$ such that the affine subspace $L\subseteq\AAA^n_k$ defined by $A\cdot\mathbf{x}=\mathbf {b}$ is in general position among its translates. Then the cohomology sheaf $\HH^i(K)$ is smooth at $\mathbf b$ for every $i\in\ZZ$.
\end{proposition}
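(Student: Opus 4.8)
The plan is to read off the exact locus where $K$ is lisse from the Fourier-theoretic description of $K[n]$ established above. Recall that $\HH^i(K)$ is smooth at $\mathbf b$ for every $i\in\ZZ$ precisely when $K$ is lisse in a neighbourhood of $\mathbf b$, and this can be detected microlocally: by Beilinson's theory of singular supports it holds exactly when the singular support $\mathrm{SS}(K[n])\subseteq T^\ast\AAA^m_k$ meets the fibre over $\mathbf b$ only in the zero covector (the set $\mathrm{SS}(K[n])$ being closed and conic, this is a neighbourhood condition at $\mathbf b$, and the shift by $[n]$ does not affect it). Since the previous proposition gives a geometric isomorphism $K[n]\cong FT_\psi(P)$, with $P=j_!(\LL_{\overline\chi_1(\mathbf a_1)}\otimes\cdots\otimes\LL_{\overline\chi_n(\mathbf a_n)})[m]$ the extension by zero of a smooth sheaf from the complement $U$ of the hyperplane arrangement $\{\mathbf a_i\cdot\mathbf t=0\}_{i=1}^n$, it suffices to compute $\mathrm{SS}(P)$ and transport it through the Fourier transform.

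First I would compute $\mathrm{SS}(P)$. Because $P$ is $j_!$ of a smooth sheaf on the arrangement complement, it is lisse along each flat of the arrangement, so its singular support is the union $\bigcup_F \overline{T^\ast_F\AAA^m_k}$ over the flats $F=\bigcap_{i\in S}\{\mathbf a_i\cdot\mathbf t=0\}$; the conormal to such a flat is $F\times N_F$, where $N_F=\mathrm{span}\{\mathbf a_i: F\subseteq\{\mathbf a_i\cdot\mathbf t=0\}\}$ is the span of those columns whose hyperplane contains $F$. Next I would invoke the compatibility of the singular support with the Fourier transform, namely that $\mathrm{SS}(FT_\psi P)$ is the image of $\mathrm{SS}(P)$ under the Legendre exchange $(\mathbf t,\xi)\mapsto(\xi,-\mathbf t)$ swapping base and cotangent directions. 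Consequently a pair $(\mathbf b,\eta)\in\mathrm{SS}(K[n])$ with $\eta\neq 0$ forces $\eta\in F$ and $\mathbf b\in N_F$ for some flat $F\neq\{0\}$ (the deepest flat $\{0\}$ contributes only the zero section, as $A$ has rank $m$). Hence $K[n]$ is lisse at $\mathbf b$ if and only if $\mathbf b\notin N_F$ for every flat $F\neq\{0\}$.

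Finally I would match this condition with the hypothesis. Since matroid closure does not change the span of a set of columns, the subspaces $N_F$ attached to flats $F\neq\{0\}$ are precisely the proper subspaces of $k^m$ spanned by subsets of the columns $\mathbf a_1,\ldots,\mathbf a_n$. Thus ``$\mathbf b\notin N_F$ for all flats $F\neq\{0\}$'' is exactly the statement that $\mathbf b$ lies in no proper subspace of $k^m$ generated by a subset of the columns of $A$, which by the characterization recalled in the introduction is precisely the condition that $L$ be in general position among its translates. This yields the proposition.

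I expect the main obstacle to be the two microlocal inputs in the $\ell$-adic setting: the computation of $\mathrm{SS}(P)$ for an extension by zero from an arrangement complement, and especially the behaviour of the singular support under $FT_\psi$. Over $\CC$ these are classical, but $\ell$-adically they rely on the Beilinson--Saito theory of singular support and characteristic cycles and its compatibility with the Fourier transform, which must either be cited or circumvented. To stay elementary one may instead replace the microlocal step by a direct local-acyclicity argument over the open set $W\subseteq\AAA^m_k$ defined by the above nonvanishing conditions: since $\LL_{\chi_1}\boxtimes\cdots\boxtimes\LL_{\chi_n}$ is supported on $\Gm^n$, one has $K=\R(A|_{\Gm^n})_!$ of a smooth sheaf, whose fibre over $\mathbf b$ is the complement in $L$ of the coordinate sections; these sections form an arrangement with fixed directions translating linearly with $\mathbf b$, and one checks that over $W$ the intersections $L\cap H_I$ are nonempty exactly when the columns off $I$ span $k^m$, independently of $\mathbf b$. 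The combinatorial type of the family is therefore constant, the family is equisingular over $W$, and $K$ is lisse there; verifying this equisingularity (and hence local acyclicity) in mixed characteristic is the delicate point of the alternative route.
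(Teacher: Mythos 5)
Your microlocal plan is internally coherent and the combinatorial endgame is correct: the conormals to the flats do swap under the Legendre exchange into the pairs $(N_F,F)$, and ``$\mathbf b\notin N_F$ for every flat $F\neq\{0\}$'' is exactly the paper's stated characterization of general position among translates. But the two inputs you yourself flag are genuine gaps, not citations waiting to be filled in. First, in positive characteristic it is not automatic that $\mathrm{SS}(j_!\LL[m])$ is contained in the union of the conormals to the flats of the arrangement: Beilinson's theory gives existence and equidimensionality of the singular support, but bounding it by the conormals of a stratification adapted to the sheaf is precisely the kind of statement that fails for wildly ramified coefficients, so a tameness argument specific to Kummer sheaves would have to be supplied. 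Second, and more seriously, the compatibility of the $\ell$-adic singular support with the Fourier--Deligne transform via the naive swap $(\mathbf t,\xi)\mapsto(\xi,-\mathbf t)$ is delicate exactly because $\LL_\psi$ is wildly ramified at infinity; there is no off-the-shelf statement at the level of singular supports on $\AAA^m_k$ that you can quote, and establishing it in the generality needed is comparable in difficulty to the proposition itself. (Only the inclusion of $\mathrm{SS}$ in the union of conormals is needed for the direction of the implication you want, which lightens the first burden somewhat, but removes neither obstacle.)

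Your fallback ``elementary'' route is essentially the paper's actual proof --- a direct verification that $A$ is locally acyclic for the sheaf over a neighbourhood of $\mathbf b$, by exhibiting the family as \'etale-locally a product --- but as sketched it omits the part where all the work is. Since the fibres $L_{\mathbf z}\cap\GG_m^n$ are affine and the pushforward is $\R A_!$, constancy of the combinatorial type of the coordinate-hyperplane traces on $L_{\mathbf z}$ is not sufficient: one must compactify (the paper passes to the projective closure $\overline X\subseteq\PP^n_k\times\AAA^m_k$) and control the boundary strata at infinity as $\mathbf z$ varies. That is where the hypothesis is actually consumed --- in particular the fact that $L$ cannot contain the origin, and the appearance of the character $\chi_1\cdots\chi_n$ along $\{x_0=0\}$, which the paper treats in a separate sub-case when $\dim(\overline L\cap H_\infty\cap\overline H_I)$ jumps. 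Without that analysis at infinity the equisingularity claim over $W$, and hence the smoothness of $\HH^i(K)$ at $\mathbf b$, is unsubstantiated.
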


\begin{proof}
 Let $\overline X\subseteq \PP^n_k\times\AAA^m_k$ be the projective closure of the graph $X=\Gamma_A$ of the map $A:\AAA^n_k\to\AAA^m_k$. Then $K=\R\pi_{2\ast}\FF$, where $\FF$ is the extension by zero to $\overline X$ of the sheaf $\LL_{\chi_1(x_1)}\otimes\cdots\otimes\LL_{\chi_n(x_n)}$ on $X$. We claim that $\pi_2:\overline X\to\AAA^m_k$ is locally acyclic for $\FF$ at every point of its fibre at $\mathbf b$ \cite[2.12]{deligne569finitude}. By \cite[A.2.2]{deligne569finitude}, this implies that all cohomology sheaves of $K$ are smooth at $\mathbf b$. 
 
 Following \cite[3.7.3]{deligne1980conjecture} we will show that, over some neighborhood $U$ of $\mathbf b$ in $\AAA^m_k$, locally for the \'etale topology on $\overline X$, the pair $(\overline X,\FF)$ is $U$-isomorphic to the product of $U$ and a scheme endowed with a $\Ql$-sheaf $\GGG$ (in other words, that $\FF$ is \'etale-locally isomorphic to a sheaf independent of $\mathbf b$). We consider different cases depending on the position of the point ${\mathbf x}\in\pi_2^{-1}(\mathbf b)$ (viewed in $\PP^n_k$).
 
  a) Suppose that $\mathbf x\in\AAA^n_k$ has all its coordinates different from zero. Then $\FF$ is smooth in a neighborhood of $\mathbf x$, and in fact can be trivialized by taking a finite \'etale map, so the assertion is clear in this case.
  
  b) Suppose that $\mathbf x\in\AAA^n_k$ has some coordinates equal to zero. Without loss of generality, assume that $x_1=\cdots=x_e=0$ and $x_i\neq 0$ for $e+1 \leq i \leq n$. Then $\LL_{\chi_{e+1}(x_{e+1})}\otimes\cdots\otimes\LL_{\chi_n(x_n)}$ is smooth at $\mathbf x$, so $\FF$ is \'etale-localy isomorphic to $\LL_{\chi_1(x_1)}\otimes\cdots\otimes\LL_{\chi_e(x_e)}$.
  
  Since $L$ is in general position among its translates, $L\cap H_{\{1,\ldots,e\}}$ has dimension $d-e$. In particular, $t_1=x_1,\ldots,t_e=x_e$ is part of a system of parameters of $L$ at $\mathbf x$. Solving the system $A\cdot\mathbf x=\mathbf z$ for a generic $\mathbf z$ we get a parametrization
  $$\phi_{\mathbf z}:(t_1,\ldots,t_d)\mapsto(t_1,\ldots,t_e,\alpha_{e+1}(\mathbf z)+\sum_{i=1}^d c_{e+1,i}t_i,\ldots,\alpha_n(\mathbf z)+\sum_{i=1}^d c_{n,i}t_i)
 $$
 of $\pi_2^{-1}(\mathbf z)$ for $\mathbf z$ in a neighborhood $U$ of $\mathbf b$. The pull-back of $\LL_{\chi_1(x_1)}\otimes\cdots\otimes\LL_{\chi_e(x_e)}$ to $\AAA^d_k\times U$ via the map $(\mathbf t,\mathbf z)\mapsto(\phi_{\mathbf z}(\mathbf t),\mathbf t)$ is the $\GGG$ we are looking for.

 c) Suppose that $\mathbf x=(0:x_1:\ldots:x_n)\in\PP^n_k\backslash\AAA^n_k$. Without loss of generality, assume that $x_1=\ldots=x_e=0$ and $x_i\neq 0$ for $e+1\leq i\leq n$ for some $0\leq e\leq n-1$. Dehomogenizing with respect to $x_n$ and letting $y_i=x_i/x_n$ for $0\leq i\leq n-1$ be the new affine variables, we see that 
 $$
 \LL_{\chi_1(x_1/x_0)}\otimes\cdots\otimes\LL_{\chi_n(x_n/x_0)}\cong \LL_{\chi_1(y_1)}\otimes\cdots\otimes\LL_{\chi_{n-1}(y_{n-1})}\otimes\LL_{\overline{\chi_1\cdots\chi_n}(y_0)}
 $$
 which is \'etale-locally isomorphic to $\LL_{\chi_1(y_1)}\otimes\cdots\otimes\LL_{\chi_{e}(y_{e})}\otimes\LL_{\overline{\chi_1\cdots\chi_n}(y_0)}$.
 
 Note that the intersection of ${\overline L}$ (the projective closure of $L$) with the hyperplane at infinity $H_\infty=\{y_0=0\}$, of dimension $d-1$, does not depend on $\mathbf b$, only on the map $A$. With respect to the new coordinates, $\overline L\backslash\{x_0=0\}$ is the set of solutions of the system of equations
 $$
 \left(-\mathbf b | A'\right){\mathbf y}=-\mathbf a_n
 $$
 where $A'$ is the matrix $A$ with its last column $\mathbf a_n$ removed.
 
 Suppose that $\dim (\overline L\cap H_\infty\cap \overline H_{\{1,\ldots,e\}})=d-1-e$. Then $\{y_0,y_1,\ldots,y_e\}$ is part of a system of parameters of $\overline L$ at $\mathbf x$, and we conclude as in (b) (note that $\chi_1\cdots\chi_n$ might be trivial, but (b) is proved without any non-triviality condition on the $\chi_i$'s).
 
 Finally, suppose that $\dim (\overline L\cap H_\infty\cap \overline H_{\{1,\ldots,e\}})>d-1-e$.  Solving the system $\left(-\mathbf z | A'\right){\mathbf y}=-\mathbf a_n$ via Gaussian elimination, we get a parametrization $\phi_{\mathbf z}(\mathbf t)=(\phi_{\mathbf z,0}(\mathbf t),\phi_1(\mathbf t),\ldots,\phi_{n-1}(\mathbf t))$ of $\pi_{2}^{-1}(\mathbf z)$ for $\mathbf z$ in a neighborhood of $\mathbf b$, where $\phi_1,\ldots,\phi_{n-1}$ do not depend on $\mathbf z$. Furthermore, for $\mathbf z\neq(0,\ldots,0)$ (which is the case in a neighborhood of $\mathbf b$, since an affine subspace which is in general position among its translates can never contain the origin), $\phi_{z,0}(\mathbf t)$ has the form $\phi_0(\mathbf t)/\alpha(\mathbf z)$ where $\phi_0(\mathbf t)$ does not depend on $\mathbf z$ and $\alpha(\mathbf z)=\alpha_1 z_1+\cdots+\alpha_m z_m$ is linear on $\mathbf z$ and $\alpha(\mathbf b)\neq 0$.
 
 The pull-back of $\LL_{\chi_1(y_1)}\otimes\cdots\otimes\LL_{\chi_{e}(y_{e})}\otimes\LL_{\overline{\chi_1\cdots\chi_n}(y_0)}$ to $\AAA^d_k\times\AAA^r_k$ via $\phi(\mathbf t,\mathbf z)=(\phi_{\mathbf z}(\mathbf t),\mathbf z)$ is
 $$
 \LL_{\chi_1(\phi_1(\mathbf t))}\otimes\cdots\otimes\LL_{\chi_{e}(\phi_e(\mathbf t))}\otimes\LL_{\overline{\chi_1\cdots\chi_n}(\phi_0(\mathbf t))}\otimes\LL_{{\chi_1\cdots\chi_n}(\alpha(\mathbf z))}
 $$
 where the last factor is smooth in a neighborhood $U$ of $\mathbf b$. It is then \'etale-locally isomorphic to $\LL_{\chi_1(\phi_1(\mathbf t))}\otimes\cdots\otimes\LL_{\chi_{e}(\phi_e(\mathbf t))}\otimes\LL_{\overline{\chi_1\cdots\chi_n}(\phi_0(\mathbf t))}$ over $U$, which is independent of $\mathbf z$.
  \end{proof}

  Since $K[n]$ is perverse (so all its cohomology sheaves except for $\HH^{-m}(K)$ are supported on proper closed subsets), proposition \ref{smooth} implies than $\HH^{i}(K)_{\mathbf b}=0$ for $i\neq n-m=d$, which proves the first part of Theorem \ref{thm3}. The dimension of $\HHH^d_c(L\otimes\bar k,\LL_{\chi_1}\boxtimes\cdots\boxtimes\LL_{\chi_n})$ is then $(-1)^d$ times its Euler characteristic $\chi_c(L\otimes\bar k,\LL_{\chi_1}\boxtimes\cdots\boxtimes\LL_{\chi_n})$.

Let $H:=\bigcup_{i=1}^n H_i\subseteq\AAA^n_k$ be the union of the coordinate hyperplanes. We will show that, for any affine subvariety $L\subseteq\AAA^n_k$, the Euler characteristic $\chi_c(L\otimes\bar k,\LL_{\chi_1}\boxtimes\cdots\boxtimes\LL_{\chi_n})$ is just the Euler characteristic of $U:=L\backslash H$. We proceed by induction on $n$: for $n=1$ or, more generally, whenever $d=0$ or $d=n$, the statement is clear (in the latter case, $\chi_c(\AAA^n_{\bar k},\LL_{\chi_1}\boxtimes\cdots\boxtimes\LL_{\chi_n})\cong\chi_c(\AAA^1_{\bar k},\LL_{\chi_1})\cdots\chi_c(\AAA^1_{\bar k},\LL_{\chi_n})=0=\chi_c(\GG_{m,\bar k}^n)=\chi_c(L\backslash H)$).

In the general case, assume $d<n$ and pick some $i\in\{1,\ldots,n\}$ such that the projection $\varpi$ of $L$ on the hyperplane $H_i$ is injective. Without loss of generality, assume $i=n$. Let $\pi=\pi_n:\AAA^n_k\to\AAA^1_k$ be the projection, then
$$
\R\pi_{L!}(\LL_{\chi_1}\boxtimes\cdots\boxtimes\LL_{\chi_n})\cong\LL_{\chi_n}\otimes\R\pi_{L!}(\LL_{\chi_1}\boxtimes\cdots\boxtimes\LL_{\chi_{n-1}}\boxtimes\Ql).
$$
by the projection formula. Since $\LL_{\chi_n}$ is smooth of rank $1$ on $\GG_m$ and tamely ramified at both $0$ and $\infty$, the Grothendieck-Ogg-Shafarevic formula implies that $\chi_c(\AAA^1_{\bar k},K\otimes\LL_{\chi_n})=\chi_c(\GG_{m,\bar k},K)$ for any object $K\in\Dbc(\AAA^1_{\bar k},\Ql)$. In particular,
$$
\chi_c(L\otimes\bar k,\LL_{\chi_1}\boxtimes\cdots\boxtimes\LL_{\chi_n})=\chi_c(\AAA^1_{\bar k},\R\pi_{L!}(\LL_{\chi_1}\boxtimes\cdots\boxtimes\LL_{\chi_n}))=
$$
$$
=\chi_c(\GG_{m,\bar k},\R\pi_{L!}(\LL_{\chi_1}\boxtimes\cdots\boxtimes\LL_{\chi_{n-1}}\boxtimes\Ql))=\chi_c((L\backslash H_n)\otimes\bar k,\LL_{\chi_1}\boxtimes\cdots\boxtimes\LL_{\chi_{n-1}}\boxtimes\Ql)=
$$
$$
=\chi_c(L\otimes\bar k,\LL_{\chi_1}\boxtimes\cdots\boxtimes\LL_{\chi_{n-1}}\boxtimes\Ql)-\chi_c((L\cap H_n)\otimes\bar k,\LL_{\chi_1}\boxtimes\cdots\boxtimes\LL_{\chi_{n-1}})=
$$
$$
=\chi_c(\varpi(L)\otimes\bar k,\LL_{\chi_1}\boxtimes\cdots\boxtimes\LL_{\chi_{n-1}})-\chi_c((L\cap H_n)\otimes\bar k,\LL_{\chi_1}\boxtimes\cdots\boxtimes\LL_{\chi_{n-1}})=
$$
$$
=\chi_c(\varpi(L)\backslash\bigcup_{i=1}^{n-1}H_i)-\chi_c((L\cap H_n)\backslash\bigcup_{i=1}^{n-1}H_i)=
$$
$$
=\chi_c(L\backslash\bigcup_{i=1}^{n-1}H_i)-\chi_c((L\cap H_n)\backslash\bigcup_{i=1}^{n-1}H_i)=\chi_c(L\backslash\bigcup_{i=1}^{n}H_i)
$$
by induction hypothesis.

More precisely, we have
$$
\chi_c(L\otimes\bar k,\LL_{\chi_1}\boxtimes\cdots\boxtimes\LL_{\chi_n})=\chi_c(L\backslash H)=1-\chi_c(L\cap(\bigcup_{i=1}^n H_i))=
$$
$$
=1+\left(\sum_{j=1}^n (-1)^{j}\sum_{\stackrel{I\subseteq\{1,\ldots,n\}}{|I|=j}}\chi_c(L\cap H_I)\right).
$$

For every $I\subseteq\{1,\ldots,n\}$, $L\cap H_I$ is either empty (which is always the case for $|I|>d$ since $L$ is in general position among its translates) or it is an affine subvariety of $\AAA^n_k$, which has Euler characteristic $1$. Therefore we get
$$
\chi_c(L\otimes\bar k,\LL_{\chi_1}\boxtimes\cdots\boxtimes\LL_{\chi_n})= 1+\sum_{j=1}^{d}(-1)^j a_j
$$
where $a_j$ is the number of subsets $I\subseteq\{1,\ldots,n\}$ with $j$ elements such that $L\cap H_I\neq\emptyset$. In particular, if $L$ is in general position, $a_j={n\choose j}$ for every $j=1,\ldots,d$, so
$$
\chi_c(L\otimes\bar k,\LL_{\chi_1}\boxtimes\cdots\boxtimes\LL_{\chi_n})= 1+\sum_{j=1}^{d}(-1)^j {n\choose j}=
$$
$$
=1+\sum_{j=1}^{d}(-1)^j \left({n-1\choose j-1}+{n-1\choose j}\right)=(-1)^d{n-1\choose d}.
$$
This completes the proof of Theorem \ref{thm1} (and of the first statement of Theorem \ref{thm2}).

\section{Proof of Theorem \ref{thm2}}
In this section we will compute the absolute values of the reciprocal roots of the $L$ function $L(L;\chi_1,\ldots,\chi_n)$ in the case where $L$ is in general position, proving Theorem \ref{thm2}.

We proceed by induction on $n$, the case $n=1$ (or, in general, $d=0$ or $d=n$) being trivial. Assume $n>1$ and $0<d<n$. Suppose first that there is some $i\in\{1,\ldots,n\}$ such that the product of the $\chi_j$ for $j\neq i$ is non-trivial (this is always the case except when all $\chi_i$ are equal to the same character $\chi$ with $\chi^{n-1}={\mathbf 1}$). Without loss of generality, we may assume that $i=n$. Consider the projection $\pi=\pi_n:{L\subseteq\AAA^n_k\to\AAA^1_k}$. Since $L$ is in general position, $\pi(L)=\AAA^1_k$. By the projection formula, we have
$$
\R\pi_{L!}(\LL_{\chi_1}\boxtimes\cdots\boxtimes\LL_{\chi_n})\cong\LL_{\chi_n}\otimes\R\pi_{L!}(\LL_{\chi_1}\boxtimes\cdots\boxtimes\LL_{\chi_{n-1}}\boxtimes\Ql).
$$

Let $K:=\R\pi_{L!}(\LL_{\chi_1}\boxtimes\cdots\boxtimes\LL_{\chi_{n-1}}\boxtimes\Ql)\in\Dbc(\AAA^1_k,\Ql)$. Then $\R\Gamma_c(\AAA^1_k,K)=\R\Gamma_c(L,\LL_{\chi_1}\boxtimes\cdots\boxtimes\LL_{\chi_{n-1}}\boxtimes\Ql)=\R\Gamma_c(\varpi(L),\LL_{\chi_1}\boxtimes\cdots\boxtimes\LL_{\chi_{n-1}})$ where $\varpi:\AAA^n_k\to\AAA^{n-1}_k$ is the projection onto the first $n-1$ coordinates. The last equality holds because $\varpi:L\to\varpi(L)$ is an isomorphism (otherwise $L$ would be parallel to the coordinate axis $x_n$, contradicting the general position condition).

Note that $\varpi(L)\subseteq\AAA^{n-1}_k$ is also in general position: for every $I\subseteq\{1,\ldots,n-1\}$ with $|I|\leq d+1$, if we denote by $\tilde H_I$ the intersection of the coordinate hyperplanes $H_i\subseteq\AAA^{n-1}_k$ for $i\in I$, then $\dim(\varpi(L)\cap \tilde H_I)=\dim(L\cap\varpi^{-1}(\varpi(L)\cap\tilde H_I))=\dim(L\cap\varpi^{-1}(\tilde H_I))=\dim(L\cap H_I)=d-|I|$. Therefore, by theorem \ref{thm3}, $\HHH^i_c(\AAA^1_k,K)=\HHH^i_c(\varpi(L),\LL_{\chi_1}\boxtimes\cdots\boxtimes\LL_{\chi_{n-1}})$ vanishes for $i\neq d$, and $\HHH^d_c(\AAA^1_{\bar k},K)$ has dimension ${n-2\choose d}$ and is pure of weight $d$ by induction hypothesis.

Next, we check that for all but finitely many $t\in\bar k^\times$ the fibre $\pi^{-1}(t)\in\AAA^{n-1}_{\bar k}$ is in general position. Fix $I\subseteq\{1,\ldots,n-1\}$ with $|I|\leq d$. Then $\pi^{-1}(t)\cap \tilde H_I=L\cap H_I\cap \{x_n=t\}$. Since $L$ is in general position, $\dim(L\cap H_I)=d-|I|$. So, either $\dim(L\cap H_i\cap\{x_n=t\})=d-1-|I|$, or $L\cap H_I\subseteq \{x_n=t\}$. The latter is not possible if $|I|<d$, because then $L\cap H_{I\cup\{n\}}$ would be empty, contradicting the general position hypothesis. And, if $|I|=d$, it can only happen for at most ${n-1\choose d}$ values of $t$. Furthermore, the fibre at $t=0$ (and, more generally, the intersection of $L$ with any subset of the coordinate hyperplanes) is in general position (with respect to the remaining coordinate hyperplanes).

By induction hypothesis and Proposition \ref{smooth}, we conclude that there is a finite subset $Z\subseteq\AAA^1_{\bar k}$ not containing $0$ such that on $U:=\AAA^1_{\bar k}\backslash Z$ all cohomology sheaves of $K$ other than $\HH^{d-1}(K)$ vanish, and $\HH^{d-1}(K)$ is smooth of rank ${n-2\choose d-1}$ and pure of weight $d-1$. Furthermore, by the previous section $K$ is the restriction to a line of a perverse sheaf on $\AAA^{m}_k$ (namely, $RA'_!(\LL_{\chi_1}\boxtimes\cdots\boxtimes\LL_{\chi_{n-1}}))$ where $A'$ is the matrix $A$ with its last column deleted), which implies that $\HH^i(K)=0$ for $i<d-1$ and $\HH^{d-1}(K)$ has no punctual sections \cite[Propositions 7 and 9]{katz2003semicontinuity}. Let $\FF:=\HH^{d-1}(K)$. From the truncation distinguished triangle
$$
\FF[1-d]\to K\to\tau_{\geq d}K\to.
$$
we get
$$
\R\Gamma_c(\AAA^1_{\bar k},\FF)[1-d]\to\R\Gamma_c(\AAA^1_{\bar k},K)\to\R\Gamma_c(\AAA^1_{\bar k},\tau_{\geq d}K)\cong\bigoplus_{z\in Z}\tau_{\geq d}K_z
$$
since $\tau_{\geq d}K$ is supported on $Z$. This gives an exact sequence
$$
0\to\HHH^1_c(\AAA^1_{\bar k},\FF)\to\HHH^d_c(\AAA^1_{\bar k},K)\to\bigoplus_{z\in Z}\HH^d(K)_z\to\HHH^2_c(\AAA^1_{\bar k},\FF)\to 0
$$
and
$$
\bigoplus_{z\in Z}\HH^i(K)_z\cong\HHH^{i+2-d}_c(\AAA^1_{\bar k},\FF)=0
$$
for $i>d$. Also, $\HHH^2_c(\AAA^1_{\bar k},\FF)=\HHH^2_c(U,\FF)$ is pure of weight $d+1$ since $\FF$ is smooth and pure of weight $d-1$ on $U$, so the map $\bigoplus_{z\in Z}\HH^d(K)_z\to\HHH^2_c(\AAA^1_{\bar k},\FF)$ must be trivial as $\HH^d(K)_z$ has weights $\leq d$. Therefore $\HHH^2_c(\AAA^1_{\bar k},\FF)=0$ and the exact sequence reduces to
$$
0\to\HHH^1_c(\AAA^1_{\bar k},\FF)\to\HHH^d_c(\AAA^1_{\bar k},K)\to\bigoplus_{z\in Z}\HH^d(K)_z\to 0.
$$
Since $\HHH^d_c(\AAA^1_{\bar k},K)$ is pure of weight $d$, so are $\HHH^1_c(\AAA^1_{\bar k},\FF)$ and $\bigoplus_{z\in Z}\HH^d(K)_z$. If we denote by $j:U\hookrightarrow\PP^1_{\bar k}$ the inclusion, we have an exact sequence
$$
0\to\FF\to j_\ast j^\ast\FF \to (j_\ast j^\ast\FF)/\FF\to 0
$$
since $\FF$ has no punctual sections, which gives rise to an exact sequence (where we identify $\FF$ and its extension by zero to $\PP^1_{\bar k}$)
$$
0\to\HHH^0(Z\cup\{\infty\},(j_\ast j^\ast\FF)/\FF)\to\HHH^1_c(\AAA^1_{\bar k},\FF)\to\HHH^1(\PP^1_{\bar k},j_\ast j^\ast\FF)\to 0
$$
and, since $\HHH^0(Z\cup\{\infty\},(j_\ast j^\ast\FF)/\FF)$ has weights $\leq d-1$, we conclude that $(j_\ast j^\ast\FF)/\FF=0$ and $\FF\cong j_\ast j^\ast\FF$.

We now tensor the distinguished triangle
$$
\FF[1-d]\to K\to\tau_{\geq d}K=\bigoplus_{z\in Z}\HH^d(K)_z[-d]\to.
$$
with $\LL_{\chi_n}$ to obtain a triangle
$$
\LL_{\chi_n}\otimes\FF[1-d]\to \LL_{\chi_n}\otimes K\to\bigoplus_{z\in Z}\LL_{\chi_n}\otimes\HH^d(K)_z[-d]\to.
$$

We know that $\HHH^i_c(\AAA^1_{\bar k},\LL_{\chi_n}\otimes K)=\HHH^i_c(L\otimes\bar k,\LL_{\chi_1}\boxtimes\cdots\boxtimes\LL_{\chi_n})=0$ for $i\neq d$, and $\HHH^i_c(\AAA^1_{\bar k},\bigoplus_{z\in Z}\LL_{\chi_n}\otimes\HH^d(K)_z[-d])$ also vanishes for $i\neq d$ since $\bigoplus_{z\in Z}\LL_{\chi_n}\otimes\HH^d(K)_z$ is punctual. We have therefore an exact sequence
$$
0\to\HHH^1_c(\AAA^1_{\bar k},\LL_{\chi_n}\otimes\FF)\to\HHH^d_c(L\otimes\bar k,\LL_{\chi_1}\boxtimes\cdots\boxtimes\LL_{\chi_n})\to \bigoplus_{z\in Z}\LL_{\chi_n}\otimes\HH^d(K)_z\to
$$
$$
\to\HHH^2_c(\AAA^1_{\bar k},\LL_{\chi_n}\otimes\FF)\to 0
$$
and the same argument as above shows that $\HHH^2_c(\AAA^1_{\bar k},\LL_{\chi_n}\otimes\FF)=0$. Since $\bigoplus_{z\in Z}\LL_{\chi_n}\otimes\HH^d(K)_z=\LL_{\chi_n}\otimes(\bigoplus_{z\in Z}\HH^d(K)_z)$ is pure of weight $d$, we conclude that the weight $<d$ subspace of $\HHH^d_c(L\otimes\bar k,\LL_{\chi_1}\boxtimes\cdots\boxtimes\LL_{\chi_n})$ is isomorphic to that of $\HHH^1_c(\AAA^1_{\bar k},\LL_{\chi_n}\otimes\FF)$.

Let $j:\GG_{m,k}\hookrightarrow\PP^1_k$ be the inclusion. Since $\FF$ is isomorphic to $j_\ast j^\ast\FF$ and $\LL_{\chi_n}$ is smooth on $\GG_{m,k}$, $j_\ast j^\ast(\LL_{\chi_n}\otimes\FF)\cong\LL_{\chi_n}\otimes j_\ast j^\ast\FF\cong\LL_{\chi_n}\otimes\FF$ on $\GG_{m,k}$, and we get an exact sequence
$$
0\to \LL_{\chi_n}\otimes\FF\to j_\ast j^\ast(\LL_{\chi_n}\otimes\FF)\to i_{0\ast}i_{0}^\ast j_\ast(\LL_{\chi_n}\otimes\FF)\oplus i_{\infty\ast}i_{\infty}^\ast j_\ast(\LL_{\chi_n}\otimes\FF)\to 0
$$
and, taking cohomology,
$$
0\to j_\ast(\LL_{\chi_n}\otimes\FF)_0\oplus j_\ast(\LL_{\chi_n}\otimes\FF)_\infty\to\HHH^1_c(\AAA^1_{\bar k},\LL_{\chi_n}\otimes\FF)\to\HHH^1_c(\PP^1_{\bar k},j_\ast j^\ast(\LL_{\chi_n}\otimes\FF))\to 0.
$$

But $\LL_{\chi_n}\otimes\FF$ is totally ramified at $0$ (since $\FF$ is unramified and $\LL_{\chi_n}$ is totally ramified), so $j_\ast(\LL_{\chi_n}\otimes\FF)_0=0$. On the other hand, $\HHH^1_c(\PP^1_{\bar k},j_\ast j^\ast(\LL_{\chi_n}\otimes\FF))$ is pure of weight $d$ by \cite[Théorème 2]{deligne1980conjecture}. We conclude that the weight $<d$ subspace of $\HHH^1_c(\AAA^1_{\bar k},\LL_{\chi_n}\otimes\FF)$ is isomorphic to $j_\ast(\LL_{\chi_n}\otimes\FF)_\infty$.

We claim that, as a representation of the inertia group $I_\infty$ of $\PP^1_{\bar k}$ at infinity, $\LL_{\chi_n}\otimes\FF$ is isomorphic to a direct sum of ${n-2}\choose{d-1}$ copies of $\LL_{\chi_1\cdots\chi_{n}}\cong\LL_{\chi_1}\otimes\cdots\otimes\LL_{\chi_{n}}$. Equivalently, $\LL_{(\chi_1\cdots\chi_{n-1})^{-1}}\otimes\FF$ is unramified at infinity. We have
$$
\LL_{(\chi_1\cdots\chi_{n-1})^{-1}}\otimes\FF=\LL_{(\chi_1\cdots\chi_{n-1})^{-1}}\otimes\R^{d-1}\pi_{L!}(\LL_{\chi_1(x_1)}\otimes\cdots\otimes\LL_{\chi_{n-1}(x_{n-1})})\cong
$$
$$
\cong\R^{d-1}\pi_{L!}(\LL_{\chi_1(x_1/x_n)}\otimes\cdots\otimes\LL_{\chi_{n-1}(x_{n-1}/x_n)})\cong
$$
$$
\cong
\R^{d-1}\pi_{M!}(\LL_{\chi_1(x_1)}\otimes\cdots\otimes\LL_{\chi_{n-1}(x_{n-1})})
$$
where $M\subseteq\AAA^n_k$ is $\sigma^{-1}(L)$, $\sigma:\AAA^n_k\to\AAA^n_k$ is the map $(x_1,\ldots,x_{n-1},x_n)\mapsto(x_1x_n,\ldots,x_{n-1}x_n,x_n)$ (which is an isomorphism away form the coordinate hyperplanes) and $\pi_M$ is the restriction of $\pi$ to $M$. If $A\cdot{\mathbf x}={\mathbf b}$ is a system of independent linear equations that define $L$, $M$ is defined by $(\mathbf a_1 x_1+\cdots+\mathbf a_{n-1}x_{n-1}+\mathbf a_n)x_n=\mathbf b$, where $\mathbf a_1,\ldots,\mathbf a_n$ are the columns of the matrix $A$. If $\iota:\GG_{m,k}\to\GG_{m,k}$ denotes the inversion map, we have then 
$$
\iota^\ast(\LL_{(\chi_1\cdots\chi_{n-1})^{-1}}\otimes\FF)\cong\R^{d-1}\pi_{N!}(\LL_{\chi_1(x_1)}\otimes\cdots\otimes\LL_{\chi_{n-1}(x_{n-1})})
$$
where $N\subseteq\AAA^n_k$ is defined by $\mathbf a_1 x_1+\cdots+\mathbf a_{n-1}x_{n-1}+\mathbf a_n=\mathbf b x_n$ or $A'\cdot\mathbf x=\mathbf b x_n-\mathbf a_n$, where $A'$ is the matrix $A$ with the last column deleted. That is, $\iota^\ast(\LL_{(\chi_1\cdots\chi_{n-1})^{-1}}\otimes\FF)$ is isomorphic to the restriction via the map $\GG_{m,k}\to\AAA^{n-d}_k$ given by $\lambda\mapsto \lambda\mathbf b-\mathbf a_n$ of the sheaf $\R^{d-1}A'_!(\LL_{\chi_1}\boxtimes\cdots\boxtimes\LL_{\chi_{n-1}})$ on $\AAA^{n-d}_k$. But this sheaf is smooth at $-\mathbf a_n$ as seen in the previous section, since the affine variety defined by $A'\cdot\mathbf x=-\mathbf a_n$ is in general position (all size $(n-r)$ minors of the augmented matrix are minors of the matrix $A$, and therefore non-zero since $L$ itself is in general position). So $\iota^\ast(\LL_{(\chi_1\cdots\chi_{n-1})^{-1}}\otimes\FF)$ extends to a sheaf on $\AAA^1_k$ which is smooth at $0$ (and pure of weight $d-1$ by induction hypothesis), so it is unramified there. Hence $\LL_{(\chi_1\cdots\chi_{n-1})^{-1}}\otimes\FF$ is unramified at infinity.

Thus, if $\chi_1\cdots\chi_n$ is non-trivial, $\LL_{\chi_n}\otimes\FF\cong\LL_{\chi_1\cdots\chi_n}\otimes(\LL_{(\chi_1\cdots\chi_{n-1})^{-1}}\otimes\FF)$ is totally ramified at infinity, so $j_\ast(\LL_{\chi_n}\otimes\FF)_\infty=0$ and $\HHH^1_c(\AAA^1_{\bar k},\LL_{\chi_n}\otimes\FF)$ is pure of weight $d$. On the other hand, if $\chi_1\cdots\chi_n$ is trivial, $\LL_{\chi_n}\otimes\FF$ is unramified at infinity, so $j_\ast(\LL_{\chi_n}\otimes\FF)_\infty$ has dimension $\mathrm{rank}(\FF)={{n-2}\choose{d-1}}$ and, as seen above, is pure of weight $d-1$. We conclude that $\HHH^1_c(\AAA^1_{\bar k},\LL_{\chi_n}\otimes\FF)$ has ${{n-2}\choose{d-1}}$ Frobenius eigenvalues of weight $d-1$ and ${{n-1}\choose d}-{{n-2}\choose{d-1}}={{n-2}\choose{d}}$ of weight $d$.

It remains to prove the case where all $\chi_i$ are equal to a fixed non-trivial $\chi$ such that $\chi^{n-1}=\mathbf 1$, in which case we need to show that $\HHH^d_c(\AAA^1_{\bar k},K\otimes\LL_{\chi})$ is pure of weight $d$. By the previous case, we know that $\HHH^d_c(\AAA^1_{\bar k},K\otimes\LL_{\eta})\cong\HHH^d_c(L\otimes\bar k,\LL_{\chi_1}\boxtimes\cdots\boxtimes\LL_{\chi_{n-1}}\boxtimes\LL_\eta)$ is pure of weight $d$ for every non-trivial character $\eta$ of $k^\times$ other than $\chi$.

Like in the previous case, the distinguished triangle
$$
\FF[1-d]\to K\to\tau_{\geq d}K\to.
$$
gives rise to an exact sequence
$$
0\to\HHH^1_c(\AAA^1_{\bar k},\FF)\to\HHH^d_c(\AAA^1_{\bar k},K)\to\bigoplus_{z\in Z}\HH^d(K)_z\to\HHH^2_c(\AAA^1_{\bar k},\FF)\to 0
$$
and, similarly,
$$
0\to\HHH^1_c(\AAA^1_{\bar k},\FF\otimes\LL_\eta)\to\HHH^d_c(\AAA^1_{\bar k},K\otimes\LL_\eta)\to\bigoplus_{z\in Z}\HH^d(K\otimes\LL_\eta)_z\to 0
$$
for any non-trivial character $\eta$, since $\HHH^2_c(\AAA^1_{\bar k},\FF\otimes\LL_\eta)=0$ because $\FF$ is unramified at $0$. In particular, $\HHH^1_c(\AAA^1_{\bar k},\FF\otimes\LL_\eta)$ and $\bigoplus_{z\in Z}\HH^d(K\otimes\LL_\eta)_z$ are pure of weight $d$ for $\eta\neq\chi$, and therefore so is $\bigoplus_{z\in Z}\HH^d(K\otimes\LL_\chi)_z=\LL_{\chi/\eta}\otimes\left(\bigoplus_{z\in Z}\HH^d(K\otimes\LL_\eta)_z\right)$. So $\HHH^d_c(\AAA^1_{\bar k},K\otimes\LL_\chi)$ is pure of weight $d$ if and only if $\HHH^1_c(\AAA^1_{\bar k},\FF\otimes\LL_\chi)$ is. 

Next, we consider the exact sequence
$$
0\mapsto j_!j^\ast\FF\to\FF\to i_\ast i^\ast\FF\to 0
$$
where $j:U\hookrightarrow\AAA^1_{\bar k}$ and $i:Z\hookrightarrow\AAA^1_{\bar k}$ are the inclusions, which gives rise to a cohomology exact sequence
$$
0\to\bigoplus_{z\in Z}(\FF\otimes\LL_\eta)_z\to\HHH^1_c(U,\FF\otimes\LL_\eta)\to\HHH^1_c(\AAA^1_{\bar k},\FF\otimes\LL_\eta)\to 0
$$
for any non-trivial character $\eta$, since $\FF$ has no punctual sections. Since the weights of $\bigoplus_{z\in Z}(\FF\otimes\LL_\eta)_z$ clearly do not depend on $\eta$ (because $\LL_\eta$ is always pure of weight $0$), the result is then a consequence of the following lemma, which we can apply in this case given that $\FF\cong\LL_{(\chi_1\cdots\chi_{n-1})^{-1}}\otimes\FF$ is unramified at infinity as shown in the proof of the previous case.

\begin{lemma}
 Let $\FF$ be a smooth $\Ql$-sheaf on some non-empty open set $U\subseteq\AAA^1_k$, mixed of some weights and unramified at $0$ and $\infty$. Then the dimension and the weights of $\HHH^1_c(U,\FF\otimes\LL_\eta)$, for $\eta$ a non-trivial multiplicative character of $k^\times$, are independent of $\eta$.
\end{lemma}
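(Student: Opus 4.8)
The plan is to read off both the dimension and the weights of $\HHH^1_c$ from a local analysis at the boundary $\PP^1\setminus V$, using Grothendieck--Ogg--Shafarevich for the dimension and reducing the weight computation to the pure case. First I would make everything lisse: since $\LL_\eta$ is extended by zero at the origin, $\HHH^i_c(U,\FF\otimes\LL_\eta)=\HHH^i_c(V,\FF\otimes\LL_\eta)$ for $V:=U\cap\Gm$, on which $\GGG_\eta:=\FF\otimes\LL_\eta$ is lisse; set $j:V\hookrightarrow\PP^1$ and $S:=\PP^1\setminus V=Z\sqcup\{0,\infty\}$ with $Z\subset\Gm$ the (finite) ramification locus of $\FF$. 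Since $V$ is affine, $\HHH^0_c(V,\GGG_\eta)=0$. Because $\FF$ is unramified at $0$, the inertia $I_0$ acts on $\GGG_\eta$ through the nontrivial character $\eta$, so $(\GGG_\eta)^{I_0}=0$, and the same holds for $\GGG_\eta^\vee=\FF^\vee\otimes\LL_{\eta^{-1}}$; since the geometric monodromy invariants inject into the $I_0$-invariants, Poincaré duality gives $\HHH^2_c(V,\GGG_\eta)=\HHH^0(V,\GGG_\eta^\vee)^\vee(-1)=0$. Finally $\LL_\eta$ is everywhere tame, so $\mathrm{Sw}_x(\FF\otimes\LL_\eta)=\mathrm{Sw}_x(\FF)$ for all $x$, and hence $\chi_c(V,\GGG_\eta)=\mathrm{rank}(\FF)\cdot\chi_c(V)-\sum_{x\in S}\mathrm{Sw}_x(\FF)$ is independent of $\eta$; together with the two vanishings this gives $\dim\HHH^1_c(V,\GGG_\eta)=-\chi_c(V,\GGG_\eta)$, independent of $\eta$.

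For the weights I would first reduce to $\FF$ pure. Every subquotient of $\FF$ is again lisse on $V$ and unramified at $0$ and $\infty$, so the vanishing of $\HHH^0_c$ and $\HHH^2_c$ persists along the weight filtration $W_\bullet\FF$, and the long exact sequences attached to $0\to W_{w-1}\FF\to W_w\FF\to\mathrm{gr}^W_w\FF\to 0$ (tensored with $\LL_\eta$) collapse into short exact sequences
$$ 0\to\HHH^1_c(V,W_{w-1}\FF\otimes\LL_\eta)\to\HHH^1_c(V,W_w\FF\otimes\LL_\eta)\to\HHH^1_c(V,\mathrm{gr}^W_w\FF\otimes\LL_\eta)\to 0. $$
As the characteristic polynomial of Frobenius is multiplicative along such sequences, the multiset of weights of $\HHH^1_c(V,\FF\otimes\LL_\eta)$ is the union of those of the graded pieces, so it suffices to treat the case where $\FF$ is pure of some weight $w$.

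In that case, for nontrivial $\eta$ both $\HHH^0_c(V,\GGG_\eta)$ and $\HHH^0(V,\GGG_\eta)=(\GGG_\eta)^{\pi_1^{\mathrm{geom}}}$ vanish (the latter again via $(\GGG_\eta)^{I_0}=0$), so the excision sequence attached to $0\to j_!\GGG_\eta\to j_*\GGG_\eta\to\bigoplus_{x\in S}i_{x*}(\GGG_\eta)^{I_x}\to 0$ collapses to
$$ 0\to\bigoplus_{x\in S}(\GGG_\eta)^{I_x}\to\HHH^1_c(V,\GGG_\eta)\to\HHH^1(\PP^1,j_*\GGG_\eta)\to 0. $$
Now $\HHH^1(\PP^1,j_*\GGG_\eta)$ is the cohomology of the middle extension of a pure sheaf, hence pure of weight $w+1$ by Weil~II, so only its dimension matters, and that dimension equals $\dim\HHH^1_c(V,\GGG_\eta)-\sum_x\dim(\GGG_\eta)^{I_x}$, which is independent of $\eta$ by the first paragraph. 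For the local terms the key dichotomy is: at $x\in\{0,\infty\}$ one has $(\GGG_\eta)^{I_x}=0$ as above, while at $x\in Z$ the Kummer sheaf $\LL_\eta$ is lisse and pure of weight $0$, so $(\GGG_\eta)^{I_x}=\FF^{I_x}\otimes(\LL_\eta)_x$ has the same dimension and the same weights as $\FF^{I_x}$. Combining, the weights of $\HHH^1_c(V,\GGG_\eta)$ (with multiplicity) are those of $\bigoplus_{x\in Z}\FF^{I_x}$ together with $\dim\HHH^1(\PP^1,j_*\GGG_\eta)$ copies of $w+1$, all independent of $\eta$, which finishes the pure case and hence the lemma. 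The main obstacle is precisely this last bookkeeping: one needs the middle-extension purity of $\HHH^1(\PP^1,j_*\GGG_\eta)$ in order to pin down its contribution to the weights by its dimension alone, and one needs the clean separation between the points of $Z$, where $\LL_\eta$ is an unramified weight-$0$ twist, and the points $0,\infty$, where it is totally ramified and kills the local invariants.
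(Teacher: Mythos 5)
Your proposal is correct and follows essentially the same route as the paper's proof: Grothendieck--Ogg--Shafarevich plus the vanishing of $\HHH^0_c$ and $\HHH^2_c$ for the dimension, reduction to the pure case via a filtration with smooth subquotients, and then the excision sequence comparing $j_!$ with $j_\ast$ on $\PP^1$, with Weil~II purity for $\HHH^1(\PP^1,j_\ast)$ and the observation that at the points of $Z$ the twist by $\LL_\eta$ is an unramified weight-zero twist of the local invariants. Your version is in fact slightly more explicit than the paper's at one point, namely in checking $\HHH^0(V,\GGG_\eta)=0$ so that the boundary terms inject into $\HHH^1_c$.
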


\begin{proof}
 Since $\LL_\eta$ vanishes at $0$, we can and will assume that $0\not\in U$. Since $\FF$ is smooth an unramified at $0$ and $\infty$, $\FF\otimes\LL_\eta$ is totally ramified at both points, so $\HHH^2_c(U,\FF\otimes\LL_\eta)=0$ and the Euler characteristic of $\FF\otimes\LL_\eta$ is $-\dim\HHH^1_c(U,\FF\otimes\LL_\eta)$. By the Grothendieck-Ogg-Shafarevic formula, this Euler characteristic is $(\# Z)\cdot\mathrm{rk}(\FF\otimes\LL_\eta)+\sum_{z\in Z\cup\{0,\infty\}}\mathrm{swan}_z(\FF\otimes\LL_\eta)$ where $Z:=\GG_{m
 ,\bar k}\backslash U$, which is clearly independent of $\eta$ (tensoring with a rank one tame representation does not change the Swan conductor).
 
 For any exact sequence $0\to\GGG\to\FF\to\HH\to 0$ of smooth sheaves we get a cohomology exact sequence $0\to\HHH^1_c(U,\GGG\otimes\LL_\eta)\to\HHH^1_c(U,\FF\otimes\LL_\eta)\to\HHH^1_c(U,\HH\otimes\LL_\eta)\to 0$ (since both $\GGG\otimes\LL_\eta$ and $\HH\otimes\LL_\eta$ are also totally ramified at $0$ and $\infty$) so we may assume that $\FF$ is pure of some weight $w$.
 
 Let $j:U\hookrightarrow\PP^1_k$ be the inclusion. Note that, since $\FF\otimes\LL_\eta$ is totally ramified at $0$ and $\infty$, $j_\ast\FF$ vanishes at both points. If we denote by $i:Z\hookrightarrow\PP^1_k$ the inclusion, the exact sequence of sheaves
 $$
 0\to j_!(\FF\otimes\LL_\eta)\to j_\ast(\FF\otimes\LL_\eta)\to i_\ast i^\ast j_\ast(\FF\otimes\LL_\eta)\to 0
 $$
 induces an exact sequence
 $$
 0\to\HHH^0_c(Z,i^\ast j_\ast(\FF\otimes\LL_\eta))\to\HHH^1_c(U,\FF\otimes\LL_\eta)\to\HHH^1(\PP^1_{\bar k},j_\ast(\FF\otimes\LL_\eta))\to 0
 $$
 where the last non-zero term is pure of weight $w+1$ by \cite[Théorème 2]{deligne1980conjecture}. So it remains to show that the weights of $\HHH^0_c(Z,i^\ast j_\ast(\FF\otimes\LL_\eta))=\oplus_{z\in Z} j_\ast(\FF\otimes\LL_\eta)_z$ are independent of $\eta$. But $\LL_\eta$ is smooth at every $z\in Z$, so $j_\ast(\FF\otimes\LL_\eta)_z\cong \LL_{\eta,z}\otimes (j_\ast\FF)_z$ has the same weights as $(j_\ast\FF)_z$, which are independent of $\eta$.
\end{proof}

\bibliographystyle{amsalpha}
\bibliography{bibliography}

\end{document}